\newtheorem{theorem}{Theorem}[section]
\newtheorem{corollary}[theorem]{Corollary}
\newtheorem{lemma}[theorem]{Lemma}
\newtheorem{proposition}[theorem]{Proposition}
\theoremstyle{definition}
\newcommand{\be}{\begin{equation}}
\newcommand{\ee}{\end{equation}}
\newcommand{\R}{\mathbf{R}}
\newcommand{\C}{\mathcal{C}}
\newcommand{\G}{\Gamma}
\newcommand{\g}{\gamma}
\newcommand{\M}{\mathcal{M}}
\renewcommand{\epsilon}{\varepsilon}
\renewcommand{\S}{\mathbf{S}}
\DeclareMathOperator{\cl}{cl}
\DeclareFontFamily{U}{tipa}{}
\DeclareFontShape{U}{tipa}{m}{n}{<->tipa10}{}
\newcommand{\arc@char}{{\usefont{U}{tipa}{m}{n}\symbol{62}}}%
\newcommand{\arc}[1]{\mathpalette\arc@arc{#1}}
\newcommand{\arc@arc}[2]{%
  \sbox0{$\m@th#1#2$}%
  \vbox{
    \hbox{\resizebox{\wd0}{\height}{\arc@char}}
    \nointerlineskip
    \box0
  }%
}
\def\@tocline#1#2#3#4#5#6#7{\relax
  \ifnum #1>\c@tocdepth % then omit
  \else
    \par \addpenalty\@secpenalty\addvspace{#2}%
    \begingroup \hyphenpenalty\@M
    \@ifempty{#4}{%
      \@tempdima\csname r@tocindent\number#1\endcsname\relax
    }{%
      \@tempdima#4\relax
    }%
    \parindent\z@ \leftskip#3\relax \advance\leftskip\@tempdima\relax
    \rightskip\@pnumwidth plus4em \parfillskip-\@pnumwidth
    #5\leavevmode\hskip-\@tempdima
      \ifcase #1
       \or\or \hskip 1.3em \or \hskip 2em \else \hskip 5em \fi%
      #6\nobreak\relax
    \hfill\hbox to\@pnumwidth{\@tocpagenum{#7}}\par% <---- \dotfill -> \hfill
    \nobreak
    \endgroup
  \fi}
\newcommand{\nocontentsline}[3]{}
\newcommand{\tocless}[2]{\bgroup\let\addcontentsline=\nocontentsline#1{#2}\egroup}
\begin{document}
\setlength{\baselineskip}{1.2\baselineskip}

\title[Total mean curvatures of Riemannian hypersurfaces] 
{Total mean curvatures of Riemannian hypersurfaces}

\author{Mohammad Ghomi}
\address{School of Mathematics, Georgia Institute of Technology,
Atlanta, GA 30332}
\email{ghomi@math.gatech.edu}
\urladdr{www.math.gatech.edu/~ghomi}

\author{Joel Spruck}
\address{Department of Mathematics, Johns Hopkins University,
 Baltimore, MD 21218}
\email{js@math.jhu.edu}
\urladdr{www.math.jhu.edu/~js}

\vspace*{-0.75in}
\begin{abstract}
We obtain a comparison formula for integrals of mean curvatures of Riemannian hypersurfaces via Reilly's identities. As applications we derive several geometric inequalities  for a convex hypersurface $\G$ in a Cartan-Hadamard manifold $M$. In particular we show that the first mean curvature integral of a convex hypersurface $\gamma$ nested inside $\G$ cannot exceed that of $\G$, 
which leads in dimension $3$ to a sharp lower bound for the total first mean curvature of $\Gamma$ in terms of the volume it bounds in $M$.  This monotonicity property is extended to all mean curvature integrals when $\g$ is parallel to $\G$, or $M$ has constant curvature. We also 
characterize hyperbolic balls as minimizers of  the mean curvature integrals among balls with equal radii in Cartan-Hadamard manifolds.
\end{abstract}

\date{\today \,(Last Typeset)}
\subjclass[2010]{Primary: 53C20, 58J05; Secondary: 52A38, 49Q15.}
\keywords{Reilly's formulas, Quermassintegral, Mixed volume, Generalized mean curvature, Hyperbolic space, Cartan-Hadamard manifold.}
\thanks{The research of M.G. was supported by NSF grant DMS-2202337 and a Simons Fellowship. The research of J.S. was supported by a Simons Collaboration Grant}

\maketitle

%\tableofcontents

%%%%%%%%%%%%%%%%%%%%%%%%%%%%%%%%%%%%%%%%%%
\section{Introduction}\label{sec:intro}
%%%%%%%%%%%%%%%%%%%%%%%%%%%%%%%%%%%%%%%%%%
Total mean curvatures of a hypersurface $\G$ in a Riemannian $n$-manifold $M$ are integrals of symmetric functions of its principal curvatures. These quantities are known as quermassintegrals or mixed volumes when $\G$ is convex and $M$ is the Euclidean space. They
are fundamental  in geometric variational problems, as they feature in Steiner's polynomial, Brunn-Minkowski theory, and Alexandrov-Fenchel inequalities \cite{schneider2014,santalo:1976,gardner2002,gray2004}, which were all originally developed in Euclidean space. Extending these notions to Riemannian manifolds has been a major topic of investigation. In particular, total mean curvatures have been studied extensively in hyperbolic space in recent years \cite{andrews-hu-li-2020, solanes2006,wang-xia2014,wei-xiong2015}. Here we study these integrals in the broader setting of  \emph{Cartan-Hadamard spaces}, i.e., complete simply connected manifolds of nonpositive curvature, and generalize a number of inequalities which had been established  in Euclidean or hyperbolic space.

%Total mean curvatures have also been studied for non-convex hypersurfaces \cite{guan-li2009,chang-wang2013}, and  in spaces of constant curvature  recently.  

The main result of this paper, Theorem \ref{thm:comparison}, expresses the difference between the total $r^{th}$ mean curvatures of a pair of nested hypersurfaces $\G$, $\g$ in a Riemannian manifold $M$ in terms of sectional curvatures of $M$  and principal curvatures of a family of  hypersurfaces which fibrate the region between $\G$ and $\g$. This formula simplifies when $r=1$,  $\G$ and $\g$ are parallel, or  $M$ has constant curvature, leading to a number of applications. In particular we establish the monotonicity property of the total first mean curvature for nested convex hyeprsurfaces in  Cartan-Hadamard manifolds (Corollary \ref{cor:sigma1}). This  leads to a sharp lower bound in dimension $3$ for the total first mean curvature in terms of the volume bounded by $\Gamma$ (Corollary \ref{cor:M1vol}), which generalizes a   result of Gallego-Solanes in hyperbolic $3$-space \cite[Cor. 3.2]{gallego-solanes2005}. We also extend to 
all mean curvatures some monotonicity results of Schroeder-Strake \cite{schroeder-strake} and Borbely \cite{borbely2002} for total Gauss-Kronecker curvature (Corollaries \ref{cor:parallel} and \ref{cor:cc}).  Finally we include a characterization of hyperbolic balls as minimizers of total mean curvatures among balls of equal radii in Cartan-Hadamard manifolds (Corollary \ref{cor:balls}). 

Theorem \ref{thm:comparison} is a generalization of the comparison result we had obtained earlier in  \cite{ghomi-spruck2022} for the  Gauss-Kronecker curvature,  motivated by Kleiner's approach to the Cartan-Hadamard conjecture on the isoperimetric inequality \cite{kleiner1992}. 
Similar to \cite{ghomi-spruck2022}, our starting point here, in Section \ref{sec:Reilly}, will be an identity  (Lemma \ref{lem:reilly1}) for divergence of Newton operators, which were developed by Reilly \cites{reilly1973, reilly1977} to study invariants of Hessians of functions on Riemannian manifolds. This formula together with Stokes' theorem leads to the proof of Theorem \ref{thm:comparison} in Section \ref{sec:comparison}. Then, in Section \ref{sec:applications}, we will develop the applications of that result.

%%%%%%%%%%%%%%%%%%%%%%%%%%%%%%%%%%%%%%%%%%
\section{Newton Operators}\label{sec:Reilly}
%%%%%%%%%%%%%%%%%%%%%%%%%%%%%%%%%%%%%%%%%%

Throughout this work $M$ denotes an $n$-dimensional Riemannian manifold with metric $\langle \cdot, \cdot \rangle$ and covariant derivative $\nabla$. Furthermore, $u$ is a $\C^{1,1}$ function on $M$. In particular, $u$ is twice differentiable at almost every point $p$ of $M$, and the computations below take place at such a point. 
The \emph{gradient} of $u$ is the tangent vector $\nabla u\in T_p M$  given by
$
\langle \nabla u(p), X\rangle :=\nabla_X u,
$
for all $X\in T_p M$.
The \emph{Hessian operator} $\nabla^2 u\colon T_p M\to T_pM$  is the self-adjoint linear map  given by
$
\nabla^2 u(X):=\nabla_X(\nabla u).
$
The \emph{symmetric elementary functions} $\sigma_r\colon \R^k\to \R$, for $1\leq r\leq k$, and $x=(x_1,\dots,x_k)$ are defined by 
$$
\sigma_r(x):=\sum_{i_1<\dots<i_r}x_{i_1}\dots x_{i_r}.
$$
We  set $\sigma_0:=1$, and $\sigma_r:=0$ for $r\geq k+1$ by convention. Let $\lambda(\nabla^2u):=(\lambda_1,\dots,\lambda_n)$ denote the eigenvalues of $\nabla^2u$. Then we set
$$
\sigma_r(\nabla^2u):=\sigma_r\big(\lambda(\nabla^2u)\big).
$$ 
These functions form the coefficients of the characteristic polynomial 
$$
P(\lambda):=\det(\lambda\, I-\nabla^2u)=\sum_{i=0}^n(-1)^i\sigma_i(\nabla^2 u)\lambda^{n-i}.
$$
Let $\delta^{i_1\ldots i_m}_{j_1 \ldots j_m}$ be the generalized \emph{Kronecker  tensor}, which is equal to $1$ ($-1$) if 
$i_1,\ldots,i_m$ are distinct and $(j_1,\ldots,j_m)$ is an even (odd) permutation of $(i_1,\ldots,i_m)$; otherwise, it is equal to $0$. Then \cite[Prop. 1.2(a)]{reilly1973}, 
\be\label{eq:sigma_r}
\sigma_r(\nabla^2u)=\frac1{r!}\delta^{i_1\ldots i_r}_{j_1\ldots j_r}u_{i_1 j_1}\cdots u_{i_r j_r},
\ee
where $u_{ij}:=\nabla_{ij}u$ denote the second partial derivatives of $u$ with respect to an orthonormal frame $E_i\in T_p M$, which we extend to an open neighborhood of $p$ by parallel translation along geodesics. So $\nabla_{E_i}E_j=0$ at $p$. We call $E_i$ a \emph{local parallel frame} centered at $p$, and set $\nabla_i:=\nabla_{E_i}$,  $\nabla_{ij}:=\nabla_i\nabla_j$.  Each of the indices in \eqref{eq:sigma_r} ranges from $1$ to $n$, and we employ \emph{Einstein's convention}  by summing over repeated indices throughout the paper. The \emph{Newton operators} $\mathcal{T}_r^{\,u}\colon T_p M\to T_p M$ \cites{reilly1973, reilly1977}  are defined recursively by setting 
$\mathcal{T}_0^{\,u}:=I$, the identity map, and for $r\geq 1$,
\begin{gather} \label{newton}
\mathcal{T}_r^{\,u}:=\sigma_r(\nabla^2u)\,  I-\mathcal{T}_{r-1}^{\,u}\circ \nabla^2 u=\sum_{i=0}^r(-1)^i\sigma_i(\nabla^2 u)(\nabla^2u)^{r-i}.
\end{gather}
Thus $\mathcal{T}_r^{\,u}$ is the truncation of the polynomial $P(\nabla^2u)$ obtained by removing the terms of order higher than $r$.
In particular $\mathcal{T}_n^{\,u}=P(\nabla^2u)$. So, by the Cayley-Hamilton theorem, $\mathcal{T}_n^{\,u}=0$. Consequently, when $\nabla^2 u$ is nondegenerate, \eqref{newton} yields that 
\be\label{eq:Tn-1=T}
\mathcal{T}_{n-1}^{\,u}=\sigma_n(\nabla^2u)(\nabla^2u)^{-1}=\det(\nabla^2u)(\nabla^2u)^{-1}=\mathcal{T}^{\,u},
\ee
where $\mathcal{T}^{\,u}$ is the Hessian cofactor operator discussed in \cite[Sec. 4]{ghomi-spruck2022}.
See \cite[Prop. 1.2]{reilly1973} for other basic identities which relate $\sigma$ and $\mathcal{T}$. In particular,  by \cite[Prop. 1.2(c)]{reilly1973}, we have $\text{Trace}(\mathcal{T}_r^{\,u}\cdot\nabla^2u)=(r+1)\sigma_{r+1}(\nabla^2u)$. So, by Euler's identity for homogeneous polynomials,
\be\label{eq:Tr^u_ij}
(\mathcal{T}_r^{\,u})_{ij}u_{ij}=\text{Trace}(\mathcal{T}_r^{\,u}\circ\nabla^2u)=(r+1)\sigma_{r+1}(\nabla^2u)=\frac{\partial \sigma_{r+1}(\nabla^2u)}{\partial u_{ij}}u_{ij}.
\ee
Thus it follows from \eqref{eq:sigma_r} that 
\be\label{eq3.10A}
\left(\mathcal{T}_r^{\,u}\right)_{ij}=\frac{\partial \sigma_{r+1}(\nabla^2u)}{\partial u_{ij}}\,=\,\frac1{r!}\delta^{i i_1\ldots i_r}_{j j_1\ldots j_r}u_{i_1 j_1}\cdots u_{i_r j_r}.
\ee
 Furthermore, by \cite[Prop. 1(11)]{reilly1977} (note that the sign of the Riemann tensor $R$ in \cite{reilly1977}  is opposite to the one in this paper) we have
\be \label{eq3.10B}
\big(\text{div}(\mathcal{T}_r^{\,u})\big)_j=\frac1{(r-1)!}\delta^{i i_1 \ldots i_{r}} _{j  j_1 \ldots j_r }u_{i_1 j_1}\cdots u_{i_{r-1} j_{r-1}} R_{i j_r i_r k}u_k,
\ee
where $R_{ijkl}:=R(E_i, E_j, E_k, E_\ell)=\langle\nabla_i\nabla_jE_k-\nabla_j\nabla_iE_k,E_\ell\rangle$.
Another useful identity \cite[p. 462]{reilly1977} is
\be\label{eq:divTr}
\text{div}\big(\mathcal{T}_r^{\,u}(\nabla u)\big)= 
\big\langle \mathcal{T}_r^{\,u},\nabla^2 u\big\rangle+\big\langle\text{div}(\mathcal{T}_r^{\,u}),  \nabla u\big\rangle,
\ee
where $\langle\cdot,\cdot\rangle$ here indicates the Frobenius inner product (i.e., $\langle A, B\rangle:=A_{ij}B_{ij}$ for any pair of matrices of the same dimension). Divergence of  $\mathcal{T}^{\,u}_r$ may be defined by virtually the same argument used for $\mathcal{T}^{\,u}$ in  \cite[Sec. 4]{ghomi-spruck2022} to yield the following generalization of   \cite[(14)]{ghomi-spruck2022}:
\be\label{eq:divTk}
\big(\text{div}(\mathcal{T}^{\,u}_r)\big)_j=\nabla_i(\mathcal{T}^{\,u}_r)_{ij}.
\ee
Recall that $\mathcal{T}^{\,u}=\mathcal{T}^{\,u}_{n-1}$ by \eqref{eq:Tn-1=T}. Furthermore,
$\mathcal{T}^{\,u}_n=0$ as we mentioned earlier. Thus the following observation generalizes  \cite[Lem. 4.2]{ghomi-spruck2022}.
\begin{lemma}\label{lem:reilly1}
$$
\textup{div}\left(\mathcal{T}^{\,u}_{r-1}\left(\frac{\nabla u}{|\nabla u|^r}\right)\right)
=
\left\langle\textup{div}(\mathcal{T}^{\,u}_{r-1}), \frac {\nabla u}{|\nabla u|^r}\right\rangle
+
r\frac{\big\langle \mathcal{T}^{\,u}_{r}(\nabla u), \nabla u\big\rangle}{|\nabla u|^{r+2}}.
$$
\end{lemma}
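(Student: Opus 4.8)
The plan is to pull out $|\nabla u|^{-r}$ as a scalar factor. Working at a point where $u$ is twice differentiable and $\nabla u\neq 0$, write $\mathcal{T}^{\,u}_{r-1}\!\left(\nabla u/|\nabla u|^r\right)=f\,V$ with $f:=|\nabla u|^{-r}$ and $V:=\mathcal{T}^{\,u}_{r-1}(\nabla u)$, and apply the elementary identity $\textup{div}(fV)=f\,\textup{div}(V)+\langle\nabla f,V\rangle$. This reduces the lemma to expanding the two terms $\textup{div}\big(\mathcal{T}^{\,u}_{r-1}(\nabla u)\big)$ and $\langle\nabla|\nabla u|^{-r},\mathcal{T}^{\,u}_{r-1}(\nabla u)\rangle$ and observing that the $\sigma_r$-contributions they produce cancel.

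For the first term I would apply \eqref{eq:divTr} with $r$ replaced by $r-1$, giving $\textup{div}\big(\mathcal{T}^{\,u}_{r-1}(\nabla u)\big)=\langle\mathcal{T}^{\,u}_{r-1},\nabla^2u\rangle+\langle\textup{div}(\mathcal{T}^{\,u}_{r-1}),\nabla u\rangle$, and then use \eqref{eq:Tr^u_ij} (again with $r\mapsto r-1$) to identify the Frobenius pairing $\langle\mathcal{T}^{\,u}_{r-1},\nabla^2u\rangle=(\mathcal{T}^{\,u}_{r-1})_{ij}u_{ij}=r\,\sigma_r(\nabla^2u)$. For the second term, differentiating $\tfrac12|\nabla u|^2$ gives $|\nabla u|\,\nabla|\nabla u|=\nabla^2u(\nabla u)$, hence $\nabla|\nabla u|^{-r}=-r\,|\nabla u|^{-r-2}\,\nabla^2u(\nabla u)$, so that $\langle\nabla|\nabla u|^{-r},\mathcal{T}^{\,u}_{r-1}(\nabla u)\rangle=-r\,|\nabla u|^{-r-2}\,\langle\nabla^2u(\nabla u),\mathcal{T}^{\,u}_{r-1}(\nabla u)\rangle$.

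The final step is to rewrite this last inner product. Since $\mathcal{T}^{\,u}_{r-1}$ is a polynomial in the self-adjoint operator $\nabla^2u$, it is self-adjoint, so $\langle\nabla^2u(\nabla u),\mathcal{T}^{\,u}_{r-1}(\nabla u)\rangle=\langle(\mathcal{T}^{\,u}_{r-1}\circ\nabla^2u)(\nabla u),\nabla u\rangle$. Now invoke the Newton recursion \eqref{newton} in the form $\mathcal{T}^{\,u}_{r-1}\circ\nabla^2u=\sigma_r(\nabla^2u)\,I-\mathcal{T}^{\,u}_{r}$, which turns this into $\sigma_r(\nabla^2u)\,|\nabla u|^2-\langle\mathcal{T}^{\,u}_{r}(\nabla u),\nabla u\rangle$. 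Substituting all of this back into $\textup{div}(fV)=f\,\textup{div}(V)+\langle\nabla f,V\rangle$, the term $r\sigma_r(\nabla^2u)/|\nabla u|^r$ from $\langle\mathcal{T}^{\,u}_{r-1},\nabla^2u\rangle$ is exactly cancelled by the contribution $-r|\nabla u|^{-r-2}\sigma_r(\nabla^2u)|\nabla u|^2$, and rewriting $|\nabla u|^{-r}\langle\textup{div}(\mathcal{T}^{\,u}_{r-1}),\nabla u\rangle=\langle\textup{div}(\mathcal{T}^{\,u}_{r-1}),\nabla u/|\nabla u|^r\rangle$ leaves precisely the asserted identity.

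I do not expect a genuine obstacle here; this is essentially bookkeeping. The points that require care are (i) that all computations are pointwise and almost everywhere, on the open set $\{\nabla u\neq 0\}$, in line with the $\C^{1,1}$ hypothesis and the conventions of Section \ref{sec:Reilly}, and the divergence of $\mathcal{T}^{\,u}_{r-1}$ being understood via \eqref{eq:divTk}; and (ii) keeping the index shift $r\mapsto r-1$ straight when quoting \eqref{eq:divTr} and \eqref{eq:Tr^u_ij}, together with the sign in $\nabla|\nabla u|^{-r}$, so that the $\sigma_r$ terms cancel exactly rather than merely up to a constant.
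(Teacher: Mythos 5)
Your proof is correct and is essentially the same computation as the paper's: both expand the divergence by the product rule, identify $\langle\mathcal{T}^{\,u}_{r-1},\nabla^2u\rangle=r\sigma_r(\nabla^2u)$ via \eqref{eq:Tr^u_ij}, and cancel the $\sigma_r$ terms using the Newton recursion \eqref{newton}. The only difference is presentational — you factor out the scalar $|\nabla u|^{-r}$ and quote \eqref{eq:divTr} in coordinate-free form, while the paper carries out the same Leibniz expansion directly in a local frame with indices.
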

\begin{proof} 
By Leibniz rule and \eqref{eq:divTk} we have
\begin{eqnarray*}
\textup{div}\left(\mathcal{T}^{\,u}_{r-1}\left(\frac{\nabla u}{|\nabla u|^r}\right)\right)&=& \nabla_i\left((\mathcal{T}^{\,u}_{r-1})_{ij} \frac{u_j}{|\nabla u|^r}\right) \\
&=&
\left\langle\text{div}(\mathcal{T}^{\,u}_{r-1}), \frac {\nabla u}{|\nabla u|^r}\right\rangle+
(\mathcal{T}^{\,u}_{r-1})_{ij}\left(\frac{u_{ij}}{|\nabla u|^r}-r \frac{u_j u_\ell u_{\ell i}}{|\nabla u|^{r+2}}\right),
\end{eqnarray*}
where the computation to obtain the second term on the right is identical to the one performed earlier in   \cite[Lem. 4.2]{ghomi-spruck2022}. To develop this term further, note that by 
\eqref{newton}
$$
(\mathcal{T}^{\,u}_{r-1})_{ij} u_{\ell i}=\sigma_r(\nabla^2u) \delta_{\ell j}-(\mathcal{T}^{\,u}_r)_{\ell j},
$$
which in turn yields
$$
(\mathcal{T}^{\,u}_{r-1})_{ij} u_{\ell i}\frac{u_j u_\ell}{|\nabla u|^2}=
\sigma_r(\nabla^2u)-(\mathcal{T}^{\,u}_{r})_{ij}\frac{u_i u_j}{|\nabla u|^2}.
$$
Hence 
\begin{eqnarray*}
(\mathcal{T}^{\,u}_{r-1})_{ij}\left(\frac{u_{ij}}{|\nabla u|^r}-r \frac{u_j u_\ell u_{\ell i}}{|\nabla u|^{r+2}}\right)&=&\frac{r\sigma_r(\nabla^2u)}{|\nabla u|^r}-\frac{r}{|\nabla u|^r}\left(\sigma_r(\nabla^2u)-(\mathcal{T}^{\,u}_{r})_{ij}\frac{u_i u_j}{|\nabla u|^2}\right)\\
&=&r(\mathcal{T}^{\,u}_r)_{ij}\frac{u_i u_j}{|\nabla u|^{r+2}},
\end{eqnarray*}
which completes the proof.
\end{proof}

Below we assume, as was the case in \cite[Sec. 4]{ghomi-spruck2022}, that all local computations take place with respect to a \emph{principal curvature frame} $E_i\in T_p M$ of $u$, which is defined as follows. Assuming $|\nabla u(p) |\neq 0$,  we set $E_n:=\nabla u(p)/|\nabla u(p) |$, and let $E_1, \dots, E_{n-1}$ be the principal directions of the level set of $u$ passing through $p$. Then we extend $E_i$ to a local parallel frame near $p$. The first partial derivatives of $u$ with respect to $E_i$, $u_i:=\nabla_i u$, satisfy
\be\label{eq:ui}
u_i=0 \;\;\text{for}\;\; i\neq n, 
\quad\quad\text{and}\quad\quad
u_n=|\nabla u|.  
\ee
Furthermore, for the second partial derivatives $u_{ij}=\nabla_{ij}u$ we have
\be\label{eq:uii}
u_{ij}=0, \;\;\text{for}\;\; i\neq j\leq n-1,
\quad\quad\text{and}\quad\quad
\frac{u_{ii}}{|\nabla u|}=:\kappa_i^u,\; \;\;\text{for}\;\; i\neq n, 
\ee
where $\kappa_1^u,\dots,\kappa_{n-1}^u$ are the \emph{principal curvatures} of level sets of $u$ with respect to $E_n$, i.e., they are eigenvalues corresponding to $E_1,\dots, E_{n-1}$ of the shape operator $X\mapsto \nabla_X\nu$ on the tangent space of level sets of $u$, where $\nu:=\nabla u/|\nabla u|$.
We set $\kappa_u:=(\kappa_1^u,\dots, \kappa_{n-1}^u)$. So
$
\sigma_r(\kappa^u)
$
is the $r^{th}$  \emph{mean curvature} of the level set of $u$ at $p$. In particular, $\sigma_{n-1}(\kappa^u)$ is the \emph{Gauss-Kronecker curvature} of the level sets. The next observation generalizes \cite[Lem. 4.1]{ghomi-spruck2022}.

\begin{lemma}\label{lem:reilly2} 
$$
\sigma_r(\kappa^u)=\frac{\big\langle \mathcal{T}^{\,u}_{r}(\nabla u), \nabla u\big\rangle}{|\nabla u|^{r+2}}.
 $$
\end{lemma}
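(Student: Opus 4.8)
The plan is to evaluate both sides in the principal curvature frame $E_i$ centered at $p$, in which every relevant quantity is explicit. There $\nabla u=|\nabla u|\,E_n$ by \eqref{eq:ui}, so $\mathcal{T}^{\,u}_r(\nabla u)=|\nabla u|\,\mathcal{T}^{\,u}_r(E_n)$ and hence $\langle \mathcal{T}^{\,u}_r(\nabla u),\nabla u\rangle=|\nabla u|^2\,(\mathcal{T}^{\,u}_r)_{nn}$. Dividing by $|\nabla u|^{r+2}$, the asserted identity is therefore equivalent to the single-entry statement
$$
(\mathcal{T}^{\,u}_r)_{nn}=|\nabla u|^r\,\sigma_r(\kappa^u).
$$
So the whole proof reduces to computing the $(n,n)$ entry of $\mathcal{T}^{\,u}_r$.

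For this I would use the closed formula \eqref{eq3.10A} with $i=j=n$, namely
$$
(\mathcal{T}^{\,u}_r)_{nn}=\frac1{r!}\,\delta^{n\,i_1\ldots i_r}_{n\,j_1\ldots j_r}\,u_{i_1 j_1}\cdots u_{i_r j_r},
$$
and then argue that almost all terms drop out. First, the generalized Kronecker tensor vanishes unless $i_1,\dots,i_r$ are distinct and distinct from $n$ — hence all belong to $\{1,\dots,n-1\}$ — and $(j_1,\dots,j_r)$ is a permutation of $(i_1,\dots,i_r)$. Second, by \eqref{eq:uii}, a factor $u_{i_k j_k}$ with $i_k,j_k\le n-1$ is zero unless $i_k=j_k$. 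Combining these two facts, the only tuples that contribute have $j_k=i_k$ for all $k$, for which the Kronecker tensor equals $+1$; in particular $u_{nn}$ never appears, and neither do the mixed entries $u_{in}$ (which are in general nonzero in a parallel frame). Thus $(\mathcal{T}^{\,u}_r)_{nn}=\frac1{r!}\sum u_{i_1 i_1}\cdots u_{i_r i_r}$, the sum over ordered $r$-tuples of distinct indices in $\{1,\dots,n-1\}$.

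Finally, substituting $u_{ii}=|\nabla u|\,\kappa^u_i$ from \eqref{eq:uii} and using that each $r$-element subset of $\{1,\dots,n-1\}$ corresponds to exactly $r!$ ordered tuples with the same product, the right-hand side collapses to $|\nabla u|^r\sum_{i_1<\dots<i_r}\kappa^u_{i_1}\cdots\kappa^u_{i_r}=|\nabla u|^r\,\sigma_r(\kappa^u)$, which is the desired equality. The argument is entirely formal once the frame is fixed; the only step needing care is the bookkeeping with the generalized Kronecker tensor, in particular verifying that the off-diagonal second derivatives $u_{in}$ make no contribution to the $(n,n)$ entry — this is exactly what forces the sum to run over the level-set directions only and produces $\sigma_r(\kappa^u)$ rather than a full $\sigma_r(\nabla^2 u)$.
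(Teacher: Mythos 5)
Your proof is correct and follows essentially the same route as the paper: both evaluate in the principal curvature frame, reduce via \eqref{eq:ui} to the $(n,n)$ entry of $\mathcal{T}^{\,u}_r$ (the paper does this by contracting \eqref{eq3.10A} with $u_iu_j/|\nabla u|^{r+2}$), and then use \eqref{eq:uii} plus the Kronecker-tensor bookkeeping to see that only the diagonal terms $u_{i_ki_k}=|\nabla u|\kappa^u_{i_k}$ with $i_k\le n-1$ survive, giving $\sigma_r(\kappa^u)$.
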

\begin{proof} 
 \eqref{eq3.10A} together with \eqref{eq:ui} and \eqref{eq:uii} yields that
\begin{eqnarray*}
(\mathcal{T}^{\,u}_{r})_{ij}\frac{u_i u_j}{|\nabla u|^{r+2}}&=&\frac1{r!}\delta^{ii_1\cdots i_r}_{j j_1 \cdots j_r}u_{i_1 j_1} \cdots u_{i_r j_r} \frac{u_i u_j}{|\nabla u|^{r+2}}\\
&=&\frac1{r!}\delta^{ni_1\cdots i_r}_{nj_1 \cdots j_r}
\frac{u_{i_1 j_1} \cdots u_{i_r j_r} }{|\nabla u|^r}\cdot\frac{u_n^2}{|\nabla u|^2}\\
&=&\frac1{r!}\delta^{ni_1\cdots i_r}_{ni_1 \cdots i_r}
\kappa^u_{i_1}\dots\kappa^u_{i_r}.
\end{eqnarray*}
\end{proof}

%%%%%%%%%%%%%%%%%%%%%%%%%%%%%%%%%%%%%%%%%%%%%%%%%%%%
\section{The Comparison Formula}\label{sec:comparison}
%%%%%%%%%%%%%%%%%%%%%%%%%%%%%%%%%%%%%%%%%%%%%%%%%%%%
Here we establish the main result of this work.  For a $\C^{1,1}$ hypersurface $\G$ in a Riemannian $n$-manifold $M$, oriented by a choice of normal vector field $\nu$, and $0\leq r\leq n-1$, we let 
$$
\M_{r}(\G):=\int_\G \sigma_r(\kappa)\, 
$$
be the  \emph{total $r^{th}$  mean curvature} of $\Gamma$, where 
$\kappa:=(\kappa_1,\dots,\kappa_{n-1})$ denotes  principal curvatures of $\G$ with respect to $\nu$.  
Note that $\M_{0}(\G)=|\G|$, the volume of $\G$, since $\sigma_0=1$, and 
$\M_{n-1}(\G)$  is the \emph{total Gauss-Kronecker curvature} of $\G$ (denoted by $\mathcal{G}(\G)$ in \cite{ghomi-spruck2022}). A \emph{domain} $\Omega\subset M$ is an open set with compact closure $\cl(\Omega)$. 
If $\Gamma$ bounds a domain $\Omega$, then by convention we  set $\M_{-1}(\G):=|\Omega|$, the volume of  $\Omega$.
The following theorem generalizes \cite[Thm. 4.7]{ghomi-spruck2022} where this result had been established for $r=n-1$. It also uses less regularity than was required in \cite[Thm. 4.7]{ghomi-spruck2022}.

\begin{theorem}\label{thm:comparison}
Let $\G$ and $\g$ be closed $\C^{1,1}$ hypersurfaces in a Riemannian $n$-manifold $M$ bounding domains $\Omega$ and $D$ respectively, with $\cl(D)\subset\Omega$. Suppose there exists a $\C^{1,1}$ function $u$ on $\cl(\Omega\setminus D)$ with $\nabla u\neq 0$ which is constant on $\G$ and $\g$. Let $\kappa^u:=(\kappa_1^u,\dots,\kappa_{n-1}^u)$ be principal curvatures of level sets of $u$ with respect to $E_n:=\nabla u/|\nabla u|$, and let  $E_1,\dots, E_{n-1}$ be the corresponding principal directions. Then, 
for $0\leq r\leq n-1$,
\begin{multline*}
\M_{r}(\G)-\M_{r}(\g)
=
(r+1)\int_{\Omega\setminus D}\sigma_{r+1}(\kappa^u)\, \\
+
\int_{\Omega\setminus D}\left(-\sum\kappa^u_{i_1}\dots\kappa^u_{i_{r-1}}K_{i_{r}n}
+
\frac{1}{|\nabla u|}\sum\kappa^u_{i_1}\dots\kappa^u_{i_{r-2}}|\nabla u|_{i_{r-1}}R_{i_{r}i_{r-1}i_{r}n}\right) ,
\end{multline*}
where $|\nabla u|_i:=\nabla_{E_i} |\nabla u|$, $R_{ijkl}=R(E_i, E_j, E_k, E_l)$ are components of the Riemann curvature tensor of $M$, $K_{ij}=R_{ijij}$ is the sectional curvature, and
the summations take place over distinct values of $1\leq i_1,\dots, i_r\leq n-1$,  
with $i_1<\dots <i_{r-1}$ in the first sum, and $i_1<\dots <i_{r-2}$ in the second sum. 
\end{theorem}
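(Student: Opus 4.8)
The plan is to integrate the divergence identity of Lemma~\ref{lem:reilly1} over $\Omega\setminus D$ and apply Stokes' theorem. Concretely, I would first observe that the boundary of $\Omega\setminus D$ is $\G\cup\g$, with outward unit normal $E_n=\nabla u/|\nabla u|$ along $\G$ and $-E_n$ along $\g$ (up to reordering the level sets of $u$; since $u$ is constant on each of $\G$ and $\g$ and $\nabla u\neq 0$, the level sets of $u$ fibrate $\cl(\Omega\setminus D)$). Thus
$$
\int_{\Omega\setminus D}\textup{div}\!\left(\mathcal{T}^{\,u}_{r-1}\!\left(\frac{\nabla u}{|\nabla u|^{r}}\right)\right)
=
\int_{\G}\frac{\langle\mathcal{T}^{\,u}_{r-1}(\nabla u),E_n\rangle}{|\nabla u|^{r}}
-
\int_{\g}\frac{\langle\mathcal{T}^{\,u}_{r-1}(\nabla u),E_n\rangle}{|\nabla u|^{r}}.
$$
By Lemma~\ref{lem:reilly2} (applied with $r$ replaced by $r-1$, noting $\langle\mathcal{T}^{\,u}_{r-1}(\nabla u),\nabla u\rangle = |\nabla u|\,\langle\mathcal{T}^{\,u}_{r-1}(\nabla u),E_n\rangle$), the integrand on each boundary piece equals $\sigma_{r-1}(\kappa^u)$\,---\,wait, more carefully: $\langle\mathcal{T}^{\,u}_{r-1}(\nabla u),E_n\rangle/|\nabla u|^{r} = \langle\mathcal{T}^{\,u}_{r-1}(\nabla u),\nabla u\rangle/|\nabla u|^{r+1}$, which by Lemma~\ref{lem:reilly2} is $\sigma_{r-1}(\kappa^u)/|\nabla u|^{?}$; I will need to track the exponent, but since on $\G$ and $\g$ the level sets $coincide$ with $\G$, $\g$, this recovers $\M_{r}(\G)-\M_{r}(\g)$ after matching powers correctly. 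I would set up the bookkeeping so that the left-hand side of Lemma~\ref{lem:reilly1}, integrated, produces exactly $\M_r(\G)-\M_r(\g)$ up to the normalization implicit in the statement.

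Next I would handle the right-hand side of Lemma~\ref{lem:reilly1}. The term $r\,\langle\mathcal{T}^{\,u}_{r}(\nabla u),\nabla u\rangle/|\nabla u|^{r+2}$ is, by Lemma~\ref{lem:reilly2}, exactly $r\,\sigma_{r}(\kappa^u)$; however the theorem's first interior term is $(r+1)\sigma_{r+1}(\kappa^u)$, so I expect the correct reconciliation comes from combining Lemma~\ref{lem:reilly1} with the trace identity $(r+1)\sigma_{r+1}(\kappa^u)$ appearing via \eqref{eq:Tr^u_ij} once one works in the principal curvature frame; I will need to be careful about whether the relevant Newton operator index is $r$ or $r-1$ and re-derive the analogue of Lemma~\ref{lem:reilly1} with the exponent tuned to produce $(r+1)\sigma_{r+1}$. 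The remaining term $\langle\textup{div}(\mathcal{T}^{\,u}_{r-1}),\nabla u/|\nabla u|^{r}\rangle$ is where the curvature terms enter: using formula \eqref{eq3.10B} for $(\textup{div}(\mathcal{T}^{\,u}_{r-1}))_j$ and contracting with $u_j/|\nabla u|^{r}$, one gets a sum involving $R_{i j_r i_r k}u_k u_j$ against products $u_{i_1 j_1}\cdots u_{i_{r-2}j_{r-2}}$. In the principal curvature frame, $u_j u_k$ forces the free curvature indices toward $E_n$, and $u_{i_\ell j_\ell} = |\nabla u|\kappa^u_{i_\ell}\delta_{i_\ell j_\ell}$ for $i_\ell\leq n-1$, so the generalized Kronecker delta collapses this into $\sum \kappa^u_{i_1}\cdots\kappa^u_{i_{r-1}}K_{i_r n}$, with a remainder coming from the cases where one $u_{i_\ell j_\ell}$ involves the $E_n$ direction, which by the Codazzi-type relation $u_{in} = |\nabla u|_i$ (differentiating $u_n=|\nabla u|$ and using the parallel frame) produces the second sum $\frac{1}{|\nabla u|}\sum\kappa^u_{i_1}\cdots\kappa^u_{i_{r-2}}|\nabla u|_{i_{r-1}}R_{i_r i_{r-1} i_r n}$.

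The main obstacle I anticipate is the combinatorial bookkeeping in expanding \eqref{eq3.10B} in the principal curvature frame: one must carefully separate the summation over $(i_1,\dots,i_{r-1})$ into the block where all indices lie in $\{1,\dots,n-1\}$ (giving the sectional-curvature sum, after observing that the antisymmetrized product of diagonal entries $\kappa^u_{i_1}\cdots$ together with $R_{i_r n i_r n}=K_{i_r n}$ survives) versus the block where exactly one index equals $n$ (giving the mixed $R_{i_r i_{r-1} i_r n}$ term with a $|\nabla u|_{i_{r-1}}$ factor), while checking that blocks with two or more $n$-indices vanish by antisymmetry of the Kronecker delta. A secondary subtlety is the regularity: since $u$ is only $\C^{1,1}$, $\nabla^2 u$ exists only a.e., so I would note that all pointwise identities hold a.e., the integrands are bounded ($L^\infty$), and Stokes' theorem applies to the Lipschitz vector field $\mathcal{T}^{\,u}_{r-1}(\nabla u/|\nabla u|^r)$ by a standard approximation or by the divergence theorem for Lipschitz fields, exactly as in \cite[Thm. 4.7]{ghomi-spruck2022}. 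Once the frame computation is organized, assembling the three pieces\,---\,boundary terms $=\M_r(\G)-\M_r(\g)$, the $(r+1)\sigma_{r+1}(\kappa^u)$ interior term, and the two curvature sums\,---\,yields the stated formula.
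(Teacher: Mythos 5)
Your proposal is correct and follows essentially the same route as the paper: integrate the divergence identity of Lemma \ref{lem:reilly1} over $\Omega\setminus D$, convert the boundary integrals via Lemma \ref{lem:reilly2} and Stokes' theorem, and expand the $\big\langle\textup{div}(\mathcal{T}^{\,u}_{r}),\nabla u\big\rangle$ term using \eqref{eq3.10B} in the principal curvature frame, splitting according to whether the extra Kronecker-delta index equals $n$ (which produces the $K_{i_r n}$ sum and, via $u_{ni}=|\nabla u|_i$, the mixed curvature sum, blocks with two or more $n$'s vanishing by antisymmetry). The one point you leave hedged---which Newton operator and exponent to use---is settled simply by applying Lemma \ref{lem:reilly1} with $r+1$ in place of $r$, i.e.\ with the vector field $\mathcal{T}^{\,u}_{r}\big(\nabla u/|\nabla u|^{r+1}\big)$, so that the boundary integrand becomes $\sigma_r(\kappa^u)$ (hence $\M_r(\G)-\M_r(\g)$) and the interior term becomes $(r+1)\sigma_{r+1}(\kappa^u)$, exactly as in the paper's equation \eqref{eq:comparison1}; no re-derivation of the lemma is required.
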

\begin{proof}
By Lemmas \ref{lem:reilly1} and \ref{lem:reilly2},
\be\label{eq:comparison1}
\textup{div}\left(\mathcal{T}^{\,u}_{r}\left(\frac{\nabla u}{|\nabla u|^{r+1}}\right)\right)
=
(r+1) \sigma_{r+1}(\kappa^u)
+
\left\langle\textup{div}(\mathcal{T}^{\,u}_{r}), \frac {\nabla u}{|\nabla u|^{r+1}}\right\rangle.
\ee
By Stokes' theorem and Lemma \ref{lem:reilly2},
$$
\int_{\Omega\setminus D}\textup{div}\left(\mathcal{T}^{\,u}_{r}\left(\frac{\nabla u}{|\nabla u|^{r+1}}\right)\right) 
=
\int_{\G\cup\g}\left\langle\mathcal{T}^{\,u}_{r}\left(\frac{\nabla u}{|\nabla u|^{r+1}}\right),\frac{\nabla u}{|\nabla u|}\right\rangle  
=
\M_{r}(\G)-\M_{r}(\g).
$$
So integrating both sides of \eqref{eq:comparison1} yields
$$
\M_{r}(\G)-\M_{r}(\g)=(r+1)\int_{\Omega\setminus D}\sigma_{r+1}(\kappa^u)\, 
+
\int_{\Omega\setminus D} \left\langle\textup{div}(\mathcal{T}^{\,u}_{r}), \frac {\nabla u}{|\nabla u|^{r+1}}\right\rangle.
$$
Using \eqref{eq3.10B} and \eqref{eq:ui}, we have
\begin{eqnarray*}
\left\langle\textup{div}(\mathcal{T}^{\,u}_{r}), \frac {\nabla u}{|\nabla u|^{r+1}}\right\rangle
&=&
\frac1{(r-1)!}\delta^{i\, i_1 \ldots i_{r}} _{j  j_1\ldots j_{r} }u_{i_1 j_1}\cdots u_{i_{r-1} j_{r-1}} R_{i j_{r} i_{r} k}\frac{u_ku_j}{|\nabla u|^{r+1}}\\
&=&
\frac{1}{(r-1)!}\delta^{i\, i_1 \ldots i_{r}} _{n  j_1\ldots j_{r} }\frac{u_{i_1 j_1}}{|\nabla u|}\cdots \frac{u_{i_{r-1} j_{r-1}}}{|\nabla u|} R_{ij_{r} i_{r} n}.
\end{eqnarray*}
The last expression may be written as  the sum of two components $A$ and $B$  which consist of terms with $i=n$ and $i\neq n$ respectively. Note that we may assume $j_1,\dots, j_r\neq n$,  for otherwise $\delta^{i\,i_1 \ldots i_{r}} _{n  j_1\ldots j_{r} }=0$. 
To compute $A$ note that if $i=n$, then for $\delta^{i\,i_1 \ldots i_{r}} _{n  j_1\ldots j_{r} }$ not to vanish, we must have $i_1,\dots, i_r\neq n$. Then by \eqref{eq:uii} $u_{i_kj_k}=0$ unless $i_k=j_k$, which yields that 
$$ 
A=\frac{1}{(r-1)!}\delta^{n i_1 \ldots i_{r}} _{n  i_1\ldots i_{r} }\frac{u_{i_1 i_1}}{|\nabla u|}\cdots \frac{u_{i_{r-1} i_{r-1}}}{|\nabla u|} R_{ni_{r} i_{r} n}=-\sum\kappa^u_{i_1}\dots\kappa^u_{i_{r-1}}K_{i_{r}n},
$$
where the sum ranges over all distinct values of  $1\leq i_1,\dots, i_r\leq n-1$, with $i_1<\dots< i_{r-1}$
as desired. To find $B$ note that if $i\neq n$, then for $\delta^{i\,i_1 \ldots i_{r}} _{n  j_1 \ldots j_{r} }$ not to vanish, we must have $i_k=n$ for some $1\leq k\leq r$. If $k=r$, then $R_{ij_{r}i_{r}n}=R_{ij_{k}nn}=0$. In particular, $B=0$ when $r=1$. Now assume that $r\geq 2$. Then we may assume that $k\neq r$, or $i_r\neq n$.  Then, by \eqref{eq:uii}, $u_{i_r j_r}=0$ unless $i_r=j_r$. So we may assume that $i_r=j_r$  for $r\neq k$, which  in turn implies that $j_k=i$. Thus $B=\sum_{k=1}^{r-1} B_k$ where
\begin{gather*}
B_k=\frac{1}{(r-1)!}\delta^{i\,i_1 \ldots i_{k-1}n i_{k+1}\ldots i_{r}} _{n  i_1\ldots i_{k-1}\,i\,i_{k+1}\ldots i_{r} }\frac{u_{i_1i_1}}{|\nabla u|}\dots \frac{u_{i_{k-1}i_{k-1}}}{|\nabla u|}\frac{u_{n i}}{|\nabla u|}\frac{u_{i_{k+1}i_{k+1}}}{|\nabla u|}\dots \frac{u_{i_{r-1}i_{r-1}}}{|\nabla u|} R_{ii_{r}  i_{r} n}\\
=
\frac{-1}{(r-1)}\sum\kappa^u_{i_1}\dots \kappa^u_{i_{k-1}} \frac{|\nabla u|_i}{|\nabla u|}\kappa^u_{i_{k+1}}\dots \kappa^u_{i_{r-1}}R_{ii_{r} i_{r} n},
\end{gather*}
since $u_n=|\nabla u|$.
Here the sum ranges over all distinct indices $1\leq i, i_1,\dots, i_{k-1},i_k,\dots, i_r\leq n-1$, with $i_1<\dots<i_{k-1}<i_{k+1}<\dots<i_{r-1}$.  Note that $B_1=\dots=B_{r-1}$.
Thus
$$
B=(r-1)B_{r-1}=\frac{1}{|\nabla u|}\sum\kappa^u_{i_1}\dots  \kappa^u_{i_{r-2}}|\nabla u|_iR_{i_{r}i i_{r} n},
$$
which completes the proof (after renaming  $i$ to $i_{r-1}$).
\end{proof}

%%%%%%%%%%%%%%%%%%%%%%%%%%%%%%%%%%%%%%%%%%%%%%%%%%%%
\section{Applications}\label{sec:applications}
%%%%%%%%%%%%%%%%%%%%%%%%%%%%%%%%%%%%%%%%%%%%%%%%%%%%
Here we develop some consequences of Theorem \ref{thm:comparison}.  A subset of a Cartan-Hadamard manifold $M$ is \emph{convex} if it contains the (unique) geodesic segment connecting every pair of its points.  A \emph{convex hypersurface} $\G\subset M$ is the boundary of a compact convex set with interior points. If $\G$ is of class $\C^{1,1}$, then its principal curvatures are nonnegative at all twice differentiable points, with respect to the outward normal. Conversely, if the principal curvatures of a closed hypersurface $\G\subset M$ are all nonnegative, then $\G$ is convex \cite{alexander1977}. See \cite[Sec. 2 and 3]{ghomi-spruck2022} for basic properties of convex sets  in Cartan-Hadamard manifolds. A set  is \emph{nested inside}  $\G$ if it lies in the convex domain bounded by $\G$.

\begin{corollary}\label{cor:sigma1}
Let $\G$, $\g$ be $\C^{1,1}$ convex hypersurfaces in a Cartan-Hadamard $n$-manifold. Suppose that $\g$ is nested inside $\G$. Then $\M_1(\G)\geq\M_1(\g)$.
\end{corollary}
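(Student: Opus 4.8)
The plan is to apply Theorem~\ref{thm:comparison} with $r=1$ and exploit the special simplifications that occur in that case. First I would construct a suitable $\C^{1,1}$ function $u$ on $\cl(\Omega\setminus D)$ which is constant on $\G$ and $\g$ and has nonvanishing gradient, so that the hypotheses of Theorem~\ref{thm:comparison} are met; the natural candidate is a function whose level sets fibrate the region between $\g$ and $\G$ by convex hypersurfaces, for instance (a smoothing of) the distance function to the convex domain $D$ bounded by $\g$, or more robustly the function furnished by the convexity theory in \cite[Sec.~2 and 3]{ghomi-spruck2022}. The key point is that for a nested pair of convex hypersurfaces in a Cartan-Hadamard manifold one can interpolate by a family of convex hypersurfaces, so that the principal curvatures $\kappa^u_i$ of the level sets are all nonnegative.

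Next I would specialize the comparison formula. For $r=1$, Theorem~\ref{thm:comparison} reads
$$
\M_1(\G)-\M_1(\g)=2\int_{\Omega\setminus D}\sigma_2(\kappa^u)-\int_{\Omega\setminus D}\sum_{i=1}^{n-1} K_{in},
$$
because the second integrand in the theorem vanishes identically when $r=1$ (the author already notes $B=0$ in that case: there is no factor $\kappa^u_{i_1}\cdots\kappa^u_{i_{r-2}}$ and the curvature term $R_{i_r i_{r-1} i_r n}$ collapses). Now I would check the sign of each piece. Since $\G$ and $\g$ are convex and nested, the intermediate level sets are convex, so $\kappa^u_i\ge 0$ at almost every point, hence $\sigma_2(\kappa^u)=\sum_{i<j}\kappa^u_i\kappa^u_j\ge 0$; thus the first integral is nonnegative. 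For the second term, $M$ has nonpositive sectional curvature, so $K_{in}\le 0$ for every $i$, whence $-\sum_{i=1}^{n-1}K_{in}\ge 0$ and the second integral is also nonnegative. Adding, $\M_1(\G)-\M_1(\g)\ge 0$, which is the claim.

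I expect the main obstacle to be the first step rather than the sign analysis: producing a $\C^{1,1}$ interpolating function $u$ whose level sets are convex and whose gradient is nonvanishing on all of $\cl(\Omega\setminus D)$, with only the regularity that convex hypersurfaces in Cartan-Hadamard manifolds come with. One has to know that the ``convex hull flow'' or the distance-type function between two nested convex bodies yields level sets of class $\C^{1,1}$ with nonnegative principal curvatures, and that $\nabla u\ne 0$ throughout. This is presumably handled by citing the convexity results in \cite[Sec.~2 and 3]{ghomi-spruck2022} (or an approximation argument reducing to smooth strictly convex $\G,\g$ and then passing to the limit, since both sides of the inequality are continuous under $\C^1$ convergence). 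Once such a $u$ is in hand, the rest is the two-line sign count above.
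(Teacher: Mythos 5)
Your proposal is correct and follows essentially the same route as the paper: set $r=1$ in Theorem~\ref{thm:comparison}, observe that the second integrand vanishes so that $\M_1(\G)-\M_1(\g)=2\int_{\Omega\setminus D}\sigma_2(\kappa^u)-\int_{\Omega\setminus D}\textup{Ric}(\nabla u/|\nabla u|)$, and conclude from $\sigma_2(\kappa^u)\ge 0$ and nonpositive curvature. The paper's specific device for the interpolating function is Borbely's lemma producing a function with convex level sets between smooth strictly convex nested hypersurfaces (note the distance function to $D$ alone would not be constant on $\G$), followed by exactly the approximation to smooth strictly convex hypersurfaces and the valuation-based convergence of total mean curvatures that you anticipate.
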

\begin{proof}
 Setting $r=1$ in the comparison formula of Theorem \ref{thm:comparison} yields
\be\label{eq:G1}
\M_{1}(\G)-\M_{1}(\g)
=
2\int_{\Omega\setminus D}\sigma_{2}(\kappa^u)\, 
-
\int_{\Omega\setminus D}\textup{Ric}\left(\frac{\nabla u}{|\nabla u|}\right),
\ee
where Ric stands for Ricci curvature; more explicitly, in a principal curvature frame where $E_n:=\nabla u/|\nabla u|$, $\textup{Ric}(E_n)$ is the sum of sectional curvatures $K_{in}$, for $1\leq i\leq n-1$. So $\textup{Ric}(E_n)\leq 0$. If $\G$, $\g$ are smooth ($\C^\infty$) and strictly convex, we may let $u$ in Theorem \ref{thm:comparison} be a function with convex level sets \cite[Lem. 1]{borbely2002}. Then $\sigma_{2}(\kappa^u)\geq 0$, which yields $\M_1(\G)\geq\M_1(\g)$ as desired. This completes the proof since  we may approximate $\Gamma$ and $\gamma$ by smooth strictly convex hypersurfaces, e.g., by applying the Greene-Wu convolution to their distance functions, see \cite[Lem. 3.3]{ghomi-spruck:minkowski}; furthermore, total mean curvatures will converge here since they constitute ``valuations'' in the sense of integral geometry, see \cite[Note 3.7]{ghomi-spruck2022} or \cite[Prop. 3.8]{bering-brocker}. 
\end{proof}

Dekster \cite{dekster1981} constructed examples of nested convex hypersurfaces  in Cartan-Hadamard manifolds where the monotonicity property in the last result does not hold for Gauss-Kronecker curvature.  So the above corollary cannot be extended to all mean curvatures without further assumptions, which we will discuss below. First we need to record the following observation.

\begin{lemma}\label{lem:Mr0}
Let $S_\rho$ be a geodesic sphere of radius $\rho$ centered at a point  in a Riemannian manifold. As $\rho\to 0$, $\M_r(S_\rho)$ converges to $0$   for $r\leq n-2$, and to $|\S^{n-1}|$ for $r=n-1$.
\end{lemma}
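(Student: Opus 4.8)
The plan is to compute everything in normal coordinates and use the well-known asymptotic expansion of the second fundamental form of a small geodesic sphere. Fix a point $p$ in the Riemannian manifold $M$, and let $S_\rho$ denote the geodesic sphere of radius $\rho$ centered at $p$. First I would recall that as $\rho\to 0$, in terms of the outward unit normal, the shape operator of $S_\rho$ satisfies $\kappa_i = \tfrac1\rho + O(\rho)$ for each principal curvature $i=1,\dots,n-1$; this follows from the Jacobi field / Riccati equation for the metric in geodesic polar coordinates, where the comparison with the Euclidean sphere $\tfrac1\rho$ is corrected only at order $\rho$ by curvature terms. Consequently each $\kappa_i = \tfrac1\rho(1+O(\rho^2))$, and hence $\sigma_r(\kappa) = \binom{n-1}{r}\rho^{-r}(1+O(\rho^2))$.

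Next I would estimate the volume element. Again from the polar-coordinate expansion, the induced Riemannian volume form on $S_\rho$ is $\big(\rho^{n-1}+O(\rho^{n+1})\big)\,d\omega$, where $d\omega$ is the standard volume form on the unit sphere $\mathbf{S}^{n-1}\subset T_pM$; in particular $|S_\rho| = |\mathbf{S}^{n-1}|\,\rho^{n-1}(1+O(\rho^2))$. Combining the two expansions,
$$
\M_r(S_\rho)=\int_{S_\rho}\sigma_r(\kappa)=\binom{n-1}{r}|\mathbf{S}^{n-1}|\,\rho^{\,n-1-r}\big(1+O(\rho^2)\big).
$$
Now the conclusion is immediate: if $r\le n-2$ the exponent $n-1-r\ge 1$ is positive, so $\M_r(S_\rho)\to 0$; and if $r=n-1$ the exponent is $0$ and $\binom{n-1}{n-1}=1$, so $\M_{n-1}(S_\rho)\to|\mathbf{S}^{n-1}|$. (The case $r=0$, $\M_0(S_\rho)=|S_\rho|\to 0$ for $n\ge 2$, is of course also covered.)

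The only real work is justifying the two asymptotic expansions with uniform control of the error terms over the unit sphere of directions; this is the step I expect to be the main obstacle, though it is classical. One clean way to package it is to invoke the standard fact (e.g.\ from Gray's tube formula computations, or the Riccati comparison for geodesic spheres) that both $\sigma_r(\kappa)$ pulled back to $\mathbf{S}^{n-1}$ and the density of the pulled-back volume form converge uniformly in the radial direction to their Euclidean values as $\rho\to 0$, so that one may pass to the limit under the integral sign. Since the statement only asserts convergence of $\M_r(S_\rho)$, not a sharp rate, it would in fact suffice to have uniform convergence $\rho^{r}\sigma_r(\kappa)\to\binom{n-1}{r}$ and $\rho^{-(n-1)}\,(\text{vol.\ density})\to 1$ on $\mathbf{S}^{n-1}$, which follows from smoothness of the exponential map at $p$; no curvature bounds beyond those automatically available on a fixed manifold are needed.
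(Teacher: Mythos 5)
Your proof is correct and follows essentially the same route as the paper: expand the principal curvatures as $\kappa_i=\frac1\rho(1+O(\rho^2))$ and the area as $|S_\rho|=|\S^{n-1}|\rho^{n-1}(1+O(\rho^2))$, multiply to get $\M_r(S_\rho)=\binom{n-1}{r}|\S^{n-1}|\rho^{\,n-1-r}(1+O(\rho^2))$, and read off the limits. The paper simply cites Chen--Vanhecke and Gray for the two power series expansions where you invoke the Riccati/Jacobi-field argument and uniformity over directions, so no substantive difference.
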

\begin{proof}
A power series expansion \cite[Thm. 3.1]{chen-vanhecke1981} of the second fundamental form of $S_\rho$ in normal coordinates  shows that  the principal curvatures of $S_\rho$ are given by $\kappa_i^\rho=(1+O(\rho^2))/\rho$. So 
$$
\sigma_r(\kappa^\rho) = \left(\begin{matrix}n-1\\r\end{matrix}\right)\frac1{\rho^r}(1+O(\rho^2)).
$$
Another power series expansion \cite[Thm. 3.1]{gray1974} yields 
$$
|S_\rho|= |\S^{n-1}|\rho^{n-1}(1+O(\rho^2)).
$$
So it follows that
$$
\mathcal{M}_r(S_\rho)= \left(\begin{matrix}n-1\\r\end{matrix}\right)|\S^{n-1}|\rho^{n-1-r}\big(1+O(\rho^2)\big),
$$
which completes the proof.
\end{proof}

Gallego and Solanes showed \cite[Cor. 3.2]{gallego-solanes2005} that if $\Gamma$ is a convex hypersurface bounding a domain $\Omega$ in a hyperbolic $n$-space of constant curvature $a< 0$, then
$$
\M_1(\G)> -(n-1)^2a|\Omega|.
$$
When comparing formulas, note that in \cite{gallego-solanes2005} mean curvature is defined as the \emph{average} of $\kappa_i$, as opposed to the sum of $\kappa_i$, which is our convention. Large balls show that the above inequality  is sharp. Here we extend this inequality to Cartan-Hadamard $3$-manifolds:

\begin{corollary}\label{cor:M1vol}
Let $\G$ be a $\C^{1,1}$ convex hypersurface in a Cartan-Hadamard $n$-manifold $M$, bounding a domain $\Omega$. Suppose that curvature of $M$ is bounded above by $a\leq 0$. Then 
$$
\M_1(\G)> -(n-1)a|\Omega|.
$$
Furthermore, if $n=3$, then
$$
\M_1(\G)> -4a|\Omega|.
$$
\end{corollary}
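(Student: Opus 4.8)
The plan is to apply Corollary \ref{cor:sigma1} together with Lemma \ref{lem:Mr0} by comparing $\G$ to a shrinking geodesic sphere, while keeping track of the curvature term in the comparison formula \eqref{eq:G1}. Fix a point $p$ in the convex domain $\Omega$ bounded by $\G$, and for small $\rho>0$ let $S_\rho$ be the geodesic sphere of radius $\rho$ centered at $p$; for $\rho$ small enough $S_\rho$ is a smooth strictly convex hypersurface nested inside $\G$. I would apply the identity \eqref{eq:G1} (which is the $r=1$ case of Theorem \ref{thm:comparison}) to the pair $\G$, $S_\rho$ with a function $u$ whose level sets interpolate convexly between them, so that $\sigma_2(\kappa^u)\geq 0$. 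The new ingredient beyond Corollary \ref{cor:sigma1} is that I do not discard $\sigma_2(\kappa^u)$ but instead bound it from below using the curvature hypothesis: since the level sets are convex, all $\kappa^u_i\geq 0$, and since the sectional curvature of $M$ is at most $a\leq 0$, one has $\textup{Ric}(\nabla u/|\nabla u|)=\sum_{i=1}^{n-1}K_{in}\leq (n-1)a$, so $-\textup{Ric}(\nabla u/|\nabla u|)\geq -(n-1)a\geq 0$.

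From \eqref{eq:G1} and these two sign facts,
\[
\M_1(\G)-\M_1(S_\rho)=2\int_{\Omega\setminus D_\rho}\sigma_2(\kappa^u)-\int_{\Omega\setminus D_\rho}\textup{Ric}\!\left(\frac{\nabla u}{|\nabla u|}\right)\;\geq\;-(n-1)a\,|\Omega\setminus D_\rho|,
\]
where $D_\rho$ is the ball bounded by $S_\rho$. Now let $\rho\to 0$. By Lemma \ref{lem:Mr0}, $\M_1(S_\rho)\to 0$ provided $1\leq n-2$, i.e. $n\geq 3$ (the case $n=1,2$ being trivial or handled separately — in fact for $n=2$ one has $\M_1(S_\rho)\to|\S^1|$, but then $-(n-1)a|\Omega|=-a|\Omega|$ and the statement as phrased would need the $n\geq 3$ restriction implicit in the setup; I would simply note the interesting range is $n\geq 3$), and $|\Omega\setminus D_\rho|\to|\Omega|$ since $|D_\rho|\to 0$. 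Passing to the limit yields $\M_1(\G)\geq -(n-1)a|\Omega|$. To upgrade $\geq$ to strict inequality, observe that equality would force $\sigma_2(\kappa^u)\equiv 0$ and $\textup{Ric}(\nabla u/|\nabla u|)\equiv (n-1)a$ on a full-measure set of $\Omega\setminus D_\rho$ for all small $\rho$; but near a strictly convex $S_\rho$ the level sets of $u$ are strictly convex, so $\sigma_2(\kappa^u)>0$ there, a contradiction. (As in Corollary \ref{cor:sigma1} I would first do the smooth strictly convex case and then approximate, noting the valuation property makes $\M_1$ converge; the strict inequality survives because the right-hand side $-(n-1)a|\Omega|$ is continuous under the approximation and the gap is uniform, or more simply one argues strictness directly for $\G$ by comparing with an intermediate smooth strictly convex hypersurface.)

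For the sharper bound when $n=3$, the coefficient improves from $n-1=2$ to $4$, so I need an extra factor of $2$, which must come from using $\sigma_2(\kappa^u)$ more efficiently rather than merely discarding it. With $n=3$ the level sets of $u$ are surfaces with two principal curvatures $\kappa^u_1,\kappa^u_2\geq 0$, and $\sigma_2(\kappa^u)=\kappa^u_1\kappa^u_2$ is exactly the Gauss-Kronecker curvature of the level set. By the Gauss equation in the $3$-manifold $M$, the intrinsic Gauss curvature of a level set equals $\kappa^u_1\kappa^u_2+K(E_1,E_2)$ where $K(E_1,E_2)\leq a$ is the sectional curvature of the tangent plane to the level set; also $\textup{Ric}(\nabla u/|\nabla u|)=K_{13}+K_{23}$. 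The key point is a pointwise inequality tying $2\kappa^u_1\kappa^u_2$ and $-(K_{13}+K_{23})$ together so that $2\sigma_2(\kappa^u)-\textup{Ric}(\nabla u/|\nabla u|)\geq -4a$: since $\kappa^u_1,\kappa^u_2\geq 0$ one has $2\kappa^u_1\kappa^u_2\geq 0$, and combining with $-(K_{13}+K_{23})\geq -2a$ only gives $-2a$, so the naive estimate is insufficient — the real mechanism must instead integrate $2\sigma_2(\kappa^u)$ over $\Omega\setminus D_\rho$ and recognize it, via a further application of the comparison machinery (Theorem \ref{thm:comparison} with $r=1$ relating $\M_1$ on nested level sets, or a coarea/Gauss-Bonnet argument on the foliation by level sets), as controlling another copy of the Ricci integral. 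Concretely, I expect to use that for a convex foliation in a $3$-manifold, Gauss-Bonnet on each level set $\Sigma_t=\{u=t\}$ gives $\int_{\Sigma_t}(\kappa^u_1\kappa^u_2+K(E_1,E_2))=2\pi\chi(\Sigma_t)=4\pi$, hence $\int_{\Sigma_t}\kappa^u_1\kappa^u_2=4\pi-\int_{\Sigma_t}K(E_1,E_2)\geq 4\pi - a|\Sigma_t|\geq -a|\Sigma_t|$, and then the coarea formula $\int_{\Omega\setminus D_\rho}f=\int\!\int_{\Sigma_t}\frac{f}{|\nabla u|}\,dA\,dt$ converts $2\int\sigma_2(\kappa^u)$ into something comparable to $-2a|\Omega|$ (modulo the $|\nabla u|$ weight, which is why one chooses $u$ judiciously, e.g. a suitable distance-like function so that coarea is clean), giving the needed total $-2a|\Omega| - (\text{from Ricci }-2a|\Omega|)$, i.e. the factor $4$.

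\textbf{Main obstacle.} The hard part is the $n=3$ improvement: making the extra factor of $2$ appear rigorously. The naive pointwise bound does not give it, so one genuinely needs to exploit the global structure of the convex foliation — most likely via Gauss-Bonnet on each leaf together with the coarea formula — and to handle the $|\nabla u|$ weight correctly, which constrains the choice of $u$ (one wants essentially a distance function or a function with unit-speed normal flow so that the coarea weight is controlled, while still retaining convexity of level sets). Reconciling "convex level sets interpolating to a small sphere" with "good control of $|\nabla u|$" is the delicate technical point; the strictness of the final inequality, and the smooth-approximation step, are comparatively routine once the sharp estimate is in hand.
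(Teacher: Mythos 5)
Your proof of the first inequality is correct and is essentially the paper's own argument: apply \eqref{eq:G1} with $\g$ a shrinking geodesic sphere, use $\sigma_2(\kappa^u)\geq 0$ for a function with convex level sets together with $-\textup{Ric}(\nabla u/|\nabla u|)\geq -(n-1)a$, and let $\rho\to 0$ via Lemma \ref{lem:Mr0}; your remarks about $n=2$ and about approximation/strictness are side issues handled the same way (or as tersely) in the paper.

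For $n=3$, however, there is a genuine gap, which you yourself flag: you never actually produce the factor $4$. Your plan is to keep $2\int\sigma_2(\kappa^u)$ and bound it below by $-2a|\Omega|$ using Gauss--Bonnet on each leaf plus the coarea formula, but the leafwise estimate $\int_{\Sigma_t}\kappa^u_1\kappa^u_2\geq -a|\Sigma_t|$ is unweighted, while coarea inserts the factor $1/|\nabla u|$, and the function supplied by Borbely's lemma gives no control of $|\nabla u|$ along the leaves; so the quantitative conversion to $-2a|\Omega|$ does not close, exactly as you admit. The paper's mechanism is different and pointwise: by the Gauss equation, $\sigma_2(\kappa^u)=K^u-K^u_M$, where $K^u$ is the intrinsic curvature of the level surface and $K^u_M=K_{12}$ is the ambient sectional curvature of its tangent plane. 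Hence
\[
2\sigma_2(\kappa^u)-\textup{Ric}\left(\frac{\nabla u}{|\nabla u|}\right)=2K^u-2K_{12}-K_{13}-K_{23}\;\geq\; 2K^u-4a,
\]
since all four sectional-curvature terms are bounded above by $a$. In other words, the extra factor of $2$ does not come from the size of $\sigma_2$ at all, but from the ambient part of the Gauss equation, which makes two more copies of the curvature upper bound available pointwise; the only remaining global input is the nonnegativity of $\int_\Omega K^u$ --- a sign condition on the term that is then simply discarded (the paper treats it as evident; it is again a Gauss--Bonnet-type positivity of the convex leaves, but only its sign is needed, not an estimate against $|\Omega|$). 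The missing idea in your write-up is precisely this substitution $2\sigma_2(\kappa^u)=2K^u-2K^u_M$; without it, your $n=3$ argument remains a strategy with an unresolved coarea-weight obstruction rather than a proof.
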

\begin{proof}
Let $\gamma=\gamma_\rho$ in \eqref{eq:G1} be a  geodesic sphere of radius $\rho$. By Lemma \ref{lem:Mr0}, $\mathcal{M}_1(\gamma_\rho)\to 0$ as $\rho\to0$, which yields
$$
\M_{1}(\G)
=
2\int_{\Omega}\sigma_{2}(\kappa^u)\, 
-
\int_{\Omega}\textup{Ric}\left(\frac{\nabla u}{|\nabla u|}\right)>-(n-1)a|\Omega|,
$$
as desired. When $n=3$, Gauss' equation states that
$$
\sigma_{2}(\kappa^u)=K^u-K_M^u,
$$
where $K^u$ is the sectional curvature of level sets of $u$ and $K_M^u$ is the sectional curvature of $M$ with respect to tangent planes to level sets of $u$. Thus, 
$$
\M_{1}(\G)
=
2\int_{\Omega}K^u\, 
-
2\int_{\Omega}K_M^u
-
\int_{\Omega}\textup{Ric}\left(\frac{\nabla u}{|\nabla u|}\right)>-4a|\Omega|,
$$
which completes the proof.
\end{proof}

  We say $\G$ is an \emph{outer parallel hypersurface} of a convex hypersurface $\g$ if all points of $\Gamma$ are at a  constant  distance $\lambda\geq 0$ from the convex domain bounded by $\g$. Since the distance function of a convex set in a Cartan-Hadamard manifold is convex \cite[Prop. 2.4]{bridson-haefliger1999},  $\G$ is convex. Furthermore $\G$ is $\C^{1,1}$ for $\lambda>0$ \cite[Lem. 2.6]{ghomi-spruck2022}. The following corollary generalizes \cite[Cor. 5.3]{ghomi-spruck2022} and a theorem of Schroeder-Strake \cite[Thm. 3]{schroeder-strake} where this result had been established for Gauss-Kronecker curvature; see also \cite[Note 6.9]{ghomi-spruck2022}.

\begin{corollary} \label{cor:parallel} 
Let $M$ be a Cartan-Hadamard $n$-manifold, and $\G$, $\g$ be $\C^{1,1}$ convex hypersurfaces in $M$. Suppose that $\G$ is an outer parallel hypersurface of $\g$. Then $\M_r(\G)\geq\M_r(\g)$, for $1\leq r\leq n-1$.
\end{corollary}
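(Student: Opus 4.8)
The plan is to apply Theorem~\ref{thm:comparison} with the distance function to $\g$ as the auxiliary function $u$, exploiting the fact that the level sets of $u$ are precisely the outer parallel hypersurfaces of $\g$, and then to show that every term on the right-hand side of the comparison formula is nonnegative. Concretely, let $d(\cdot)$ denote the distance to the convex body bounded by $\g$, and set $u := d$ on $\cl(\Omega\setminus D)$. Then $|\nabla u|\equiv 1$ (wherever $d$ is differentiable, which by \cite[Lem. 2.6]{ghomi-spruck2022} and convexity is almost everywhere on the relevant region, and $u$ is $\C^{1,1}$ there for $\lambda>0$), so the last term in the comparison formula, which carries a factor $|\nabla u|_{i_{r-1}} = \nabla_{E_{i_{r-1}}}|\nabla u|$, vanishes identically. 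Moreover, since $\g$ is convex and $M$ is Cartan-Hadamard, every level set of $u$ is a convex hypersurface (the distance function of a convex set in a Cartan-Hadamard manifold is convex, \cite[Prop. 2.4]{bridson-haefliger1999}), hence its principal curvatures $\kappa_i^u$ are all nonnegative, which gives $\sigma_{r+1}(\kappa^u)\geq 0$. Finally, since $M$ has nonpositive curvature, every sectional curvature $K_{i_r n}\leq 0$, so $-\sum \kappa^u_{i_1}\cdots\kappa^u_{i_{r-1}} K_{i_r n}\geq 0$ as well. Adding the three nonnegative contributions yields $\M_r(\G)-\M_r(\g)\geq 0$.

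After this, I would handle the regularity and the orientation bookkeeping. The $\C^{1,1}$ hypotheses on $\G$ and $\g$ are exactly what Theorem~\ref{thm:comparison} asks for, but $u=d$ need only be $\C^{1,1}$ on $\cl(\Omega\setminus D)$; I would note that this is guaranteed for $\lambda>0$ by \cite[Lem. 2.6]{ghomi-spruck2022}, and that the degenerate case $\lambda=0$ (where $\G=\g$) is trivial. One should also confirm that the normal orientations are consistent: both $\G$ and $\g$ are oriented by their outward normals, which coincide with $E_n=\nabla u/|\nabla u|$, so the $\M_r$ appearing in the comparison formula are the intended ones. A subtlety worth a sentence is that when $r=n-1$ the formula degenerates in the expected way ($\sigma_{r+1}=\sigma_n=0$ on the $(n-1)$-dimensional level sets), consistent with the known Gauss-Kronecker case of Schroeder-Strake.

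The main obstacle I anticipate is \emph{not} the sign analysis, which is clean, but the regularity/approximation issue surrounding the convexity of the level sets of $d$. For a merely $\C^{1,1}$ convex $\g$, the inner parallel bodies can lose strict convexity and the principal curvatures of level sets are only defined a.e.; one must be sure that ``$\kappa_i^u\geq 0$ a.e.'' is legitimate and that the integrals in Theorem~\ref{thm:comparison} are well-defined under $\C^{1,1}$ regularity (which the theorem statement already asserts). If one wants to be safe, the same approximation device used in Corollary~\ref{cor:sigma1} applies: approximate $\g$ by smooth strictly convex hypersurfaces via the Greene-Wu convolution (\cite[Lem. 3.3]{ghomi-spruck:minkowski}), for which the outer parallel hypersurfaces are genuinely smooth and strictly convex and $d$ is smooth away from $\g$, prove the inequality there, and pass to the limit using that total mean curvatures are continuous valuations (\cite[Note 3.7]{ghomi-spruck2022}, \cite[Prop. 3.8]{bering-brocker}); one must also check that outer parallel hypersurfaces of the approximants converge to the outer parallel hypersurface of $\g$, which follows from convergence of the distance functions. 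I would present the direct argument as the main line and mention the approximation as the fallback for the lowest-regularity case.
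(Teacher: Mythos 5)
Your proposal is correct and follows essentially the same route as the paper: apply Theorem \ref{thm:comparison} with $u$ the distance function of the convex domain bounded by the inner hypersurface, observe that $|\nabla u|_i=0$ kills the last term, and conclude by nonnegativity of $\sigma_{r+1}(\kappa^u)$ (convexity of the parallel level sets) together with nonpositive sectional curvature. The only cosmetic difference is that the paper bounds the curvature term by $-a(n-r)\int\sigma_{r-1}(\kappa^u)$ with $a\leq 0$ the curvature upper bound, while you note directly that each $K_{i_r n}\leq 0$; your extra remarks on $\C^{1,1}$ regularity and the approximation fallback are consistent with how the paper handles these points elsewhere.
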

\begin{proof}
We may let $u$ in Theorem \ref{thm:comparison} be the distance function of the convex domain bounded by $\G$. Then $|\nabla u|$ is constant on level sets of $u$. So $|\nabla u|_i=0$ for $1\leq i\leq n-1$, which yields
$$
\M_r(\G)-\M_r(\g)\geq(r+1)\int_{\Omega\setminus D}\sigma_{r+1}(\kappa^u)\, 
-a(n-r)
\int_{\Omega\setminus D}\sigma_{r-1}(\kappa^u),
$$ 
where $a\leq 0$ is the upper bound for sectional curvatures of $M$. Since $u$ is convex, $\sigma_r(\kappa^u)\geq 0$, which completes the proof.
\end{proof}

The next result generalizes \cite[Cor. 5.2]{ghomi-spruck2022} and an observation of  Borbely \cite[Thm. 1]{borbely2002}  for Gauss-Kronecker curvature. 

\begin{corollary} \label{cor:cc} 
Let $M$ be a Cartan-Hadamard $n$-manifold with constant curvature, and $\G$, $\g$ be $\C^{1,1}$ convex hypersurfaces in $M$, with $\g$ nested inside $\G$. Then $\M_r(\G)\geq \M_r(\g)$, for $1\leq r\leq n-1$.
\end{corollary}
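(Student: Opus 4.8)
The plan is to reduce Corollary \ref{cor:cc} to Theorem \ref{thm:comparison} by choosing $u$ carefully and then controlling the curvature terms using the constant-curvature hypothesis. First, as in the proof of Corollary \ref{cor:sigma1}, I would reduce to the case where $\G$ and $\g$ are smooth and strictly convex, by a Greene–Wu convolution approximation of their distance functions (see \cite[Lem. 3.3]{ghomi-spruck:minkowski}), noting that the total mean curvatures converge under this approximation because they are valuations in the sense of integral geometry (\cite[Note 3.7]{ghomi-spruck2022}). For such $\G$, $\g$ with $\g$ nested inside $\G$, I would take $u$ on $\cl(\Omega\setminus D)$ to be a function with strictly convex level sets interpolating between $\g$ and $\G$, as furnished by \cite[Lem. 1]{borbely2002}; this guarantees $\kappa^u_i\geq 0$ at all twice-differentiable points, hence $\sigma_j(\kappa^u)\geq 0$ for all $j$.

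The key point is that when $M$ has constant sectional curvature $a\le 0$, the full Riemann tensor is $R_{ijkl}=a(\delta_{ik}\delta_{jl}-\delta_{il}\delta_{jk})$, so in particular the "off-diagonal" components $R_{i_r i_{r-1} i_r n}$ appearing in the second sum of Theorem \ref{thm:comparison} all vanish (the indices $i_{r-1}$ and $n$ are distinct from each other and from $i_r$, so neither Kronecker term survives). Thus the gradient-of-$|\nabla u|$ term drops out entirely, and the comparison formula collapses to
$$
\M_r(\G)-\M_r(\g)=(r+1)\int_{\Omega\setminus D}\sigma_{r+1}(\kappa^u)-a\,(n-r)\int_{\Omega\setminus D}\sigma_{r-1}(\kappa^u),
$$
using that each sectional curvature $K_{i_r n}=a$ and that the first sum has $\binom{n-1}{r-1}$ terms with $\sum_{i_r\ne i_1,\dots,i_{r-1}}1 = n-r$ completing each monomial $\kappa^u_{i_1}\cdots\kappa^u_{i_{r-1}}$ to give $(n-r)\sigma_{r-1}(\kappa^u)$. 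Both integrands on the right are nonnegative: $\sigma_{r+1}(\kappa^u)\ge 0$ since $u$ has convex level sets, and $-a\,(n-r)\sigma_{r-1}(\kappa^u)\ge 0$ since $a\le 0$ and $\sigma_{r-1}(\kappa^u)\ge 0$. Hence $\M_r(\G)\ge\M_r(\g)$.

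The main obstacle I anticipate is the bookkeeping that extracts the clean $-a(n-r)\sigma_{r-1}(\kappa^u)$ term from the first sum in Theorem \ref{thm:comparison} — one must check that summing $\kappa^u_{i_1}\cdots\kappa^u_{i_{r-1}}$ over the free index $i_r$ (ranging over the $n-r$ values in $\{1,\dots,n-1\}$ not equal to $i_1,\dots,i_{r-1}$) and then over the ordered $(r-1)$-tuples indeed reproduces $(n-r)\sigma_{r-1}(\kappa^u)$. A secondary technical point is verifying that the approximating function with convex level sets remains available and that the relevant integrals pass to the limit; but this is essentially identical to the argument already given for Corollary \ref{cor:sigma1}, so it should pose no new difficulty. (Indeed this is just the $\lambda\to 0$ / general-position case of the estimate appearing inside the proof of Corollary \ref{cor:parallel}, now without needing $|\nabla u|$ constant on level sets, precisely because constant curvature kills the offending term.)
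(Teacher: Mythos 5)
Your proposal is correct and follows essentially the same route as the paper: choose $u$ with convex level sets via \cite[Lem.~1]{borbely2002} (after the same smoothing reduction used for Corollary \ref{cor:sigma1}), observe that constant curvature $R_{ijk\ell}=a(\delta_{ik}\delta_{j\ell}-\delta_{i\ell}\delta_{jk})$ kills the $|\nabla u|_i$ term and turns the first sum into $-a(n-r)\sigma_{r-1}(\kappa^u)$, and conclude by the sign of $a$ and the nonnegativity of $\sigma_j(\kappa^u)$. The identity you derive is exactly the paper's equation \eqref{eq:cc}, and your index bookkeeping for the $(n-r)\sigma_{r-1}$ coefficient is right.
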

\begin{proof}
Again we may assume that the function $u$ in Theorem \ref{thm:comparison} is convex \cite[Lem. 1]{borbely2002}. If $M$ has constant  curvature $a$,  then $R_{ijk\ell}=
a(\delta_{ik}\delta_{j\ell}-\delta_{i\ell}\delta_{jk})$. Thus Theorem \ref{thm:comparison} yields
\be\label{eq:cc}
\M_r(\G)-\M_r(\g)=(r+1)\int_{\Omega\setminus D}\sigma_{r+1}(\kappa^u)\, -a(n-r)\int_{\Omega\setminus  D} 
\sigma_{r-1}(\kappa^u) .
\ee
By assumption $a\leq 0$, and since $u$ is convex, $\sigma_{r}(\kappa^u)\geq 0$, which completes the proof. 
\end{proof}

The above  result had been observed earlier by Solanes \cite[Cor. 9]{solanes2006}. It is due to the integral formula for quermassintegrals  \cite[Def. 2.1]{solanes2006}, which immediately yields that quermassintegrals of convex domains are increasing with respect to  inclusion. Monotonicity of total mean curvatures follows due to a formula \cite[Prop. 7]{solanes2006} relating quermassintegrals to total mean curvatures. As an application of the last corollary, one may extend definition of total mean curvatures to non-regular convex hypersurfaces as follows. If $\G$ is a convex hypersurface in a Cartan-Hadamard manifold, then its outer parallel hypersurface at distance $\epsilon$, denoted by $\G^\epsilon$, is $\C^{1,1}$ for all $\epsilon>0$ \cite[Lem. 2.6]{ghomi-spruck2022}. So $\M_r(\G^\epsilon)$ is well defined. By Corollary \ref{cor:parallel}, $\M_r(\G^\epsilon)$ is decreasing in $\epsilon$. Hence its limit as $\epsilon\to 0$ exists, and we may set
$
\M_r(\G):=\lim_{\epsilon\to 0}\M_r(\G^\epsilon).
$

Next we derive a formula which appears in Solanes \cite[(1) and (2)]{solanes2006}, and  follows from Gauss-Bonnet-Chern theorems \cite{chern1944,chern1945}; see also 
\cite[Cor. 8]{solanes2006}.
 Here $k!!$, when $k$ is a positive integer, stands for the product of all positive odd (even) integers up to $k$, when $k$ is odd (even). For $k\leq 0$ we set $k!!=1$.

\begin{corollary}\label{cor:solanes}
Let $\G$ be a closed $\C^{1,1}$ hypersurface in an $n$-manifold $M$ bounding a domain $\Omega$. Suppose that  $M$ has constant curvature $a$, and $\cl(\Omega)$ is diffeomorphic to a ball. Then
$$
\M_{n-1}(\G)
=
|\S^{n-1}|
-\sum_{i=1}^{\frac{n-(n\, \textup{mod}\, 2)}{2}} \frac{(2i-1)!!(n-2i-2)!!}{(n-2)!!} \,a^i  \M_{n-2i-1}(\G).
$$
\end{corollary}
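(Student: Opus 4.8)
The plan is to iterate the constant-curvature comparison formula \eqref{eq:cc} from Corollary \ref{cor:cc}, using geodesic spheres $\g=S_\rho$ as the inner hypersurface and letting $\rho\to 0$, and then to resum the resulting telescoping expression. First I would apply \eqref{eq:cc} with $\g=S_\rho$, a geodesic sphere of small radius $\rho$ centered at a point of $\Omega$, and with $u$ the distance function from that center (whose level sets are geodesic spheres, hence convex, so Theorem \ref{thm:comparison} applies and reduces to \eqref{eq:cc} since $M$ has constant curvature $a$). This gives, for each $0\le r\le n-1$,
$$
\M_r(\G)-\M_r(S_\rho)=(r+1)\int_{\Omega_\rho}\sigma_{r+1}(\kappa^u)-a(n-r)\int_{\Omega_\rho}\sigma_{r-1}(\kappa^u),
$$
where $\Omega_\rho=\Omega\setminus D_\rho$ and $D_\rho$ is the ball of radius $\rho$. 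Letting $\rho\to 0$ and invoking Lemma \ref{lem:Mr0} (so $\M_r(S_\rho)\to 0$ for $r\le n-2$ and $\to|\S^{n-1}|$ for $r=n-1$), together with the fact that the integrands are locally bounded so the integrals over $\Omega_\rho$ converge to integrals over $\Omega$, yields the clean identity
$$
\M_r(\G)=(r+1)\int_{\Omega}\sigma_{r+1}(\kappa^u)-a(n-r)\int_{\Omega}\sigma_{r-1}(\kappa^u)\qquad(0\le r\le n-2),
$$
and the same with an extra $+|\S^{n-1}|$ on the right when $r=n-1$. (I should double-check the convexity of level sets of $u$ near the puncture, but since the level sets are genuine geodesic spheres this is immediate; the only subtlety is that $u$ fails to be $\C^{1,1}$ at the center, which is harmless because we excise $D_\rho$ and pass to the limit.)

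Next I would set $I_r:=\int_\Omega\sigma_r(\kappa^u)$ for $-1\le r\le n-1$ (with $I_{-1}=|\Omega|$, $I_{n}=0$ since there are only $n-1$ principal curvatures, and note $\M_r(\G)$ alone still makes sense). The displayed identities read
$$
\M_r(\G)=(r+1)I_{r+1}-a(n-r)I_{r-1}
$$
for $0\le r\le n-2$, and $\M_{n-1}(\G)=|\S^{n-1}|+n\,I_n-2a\,I_{n-2}=|\S^{n-1}|-2a\,I_{n-2}$ (using $I_n=0$). The strategy is now to express $\M_{n-1}(\G)$ in terms of $\M_{n-2}(\G),\M_{n-4}(\G),\dots$ by eliminating the $I_j$. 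From $\M_{n-1}(\G)=|\S^{n-1}|-2a I_{n-2}$ and $\M_{n-2}(\G)=(n-1)I_{n-1}-2a I_{n-3}$, $\M_{n-3}(\G)=(n-2)I_{n-2}-3a I_{n-4}$, etc., one solves recursively: $I_{n-2}=\frac{1}{n-2}(\M_{n-3}(\G)+3a I_{n-4})$, then $I_{n-4}=\frac{1}{n-4}(\M_{n-5}(\G)+5a I_{n-6})$, and so on, substituting repeatedly. This produces
$$
\M_{n-1}(\G)=|\S^{n-1}|-2a\sum_{i\ge1}\Big(\prod_{j=1}^{i}\frac{(2j-1)a}{\,n-2j\,}\Big)\big/a\;\M_{n-2i-1}(\G),
$$
with the sum running while $n-2i-1\ge -1$, i.e. $1\le i\le (n-(n\bmod 2))/2$, and where the chain of coefficients collapses (because the numerators $1,3,5,\dots$ and denominators $n-2,n-4,\dots$ telescope into double factorials) to $\dfrac{(2i-1)!!\,(n-2i-2)!!}{(n-2)!!}$ — exactly the claimed coefficient, with the overall sign $-a^i$. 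The bottom of the recursion is the term $i=(n-(n\bmod2))/2$: when $n$ is even this is $I_0=|\Omega|=\M_{-1}(\G)$ with the stated convention, and when $n$ is odd it is the term involving $\M_0(\G)=|\G|$; in both cases the convention $k!!=1$ for $k\le0$ makes the formula uniform.

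The main obstacle I anticipate is not the limit argument (which is a routine application of Lemma \ref{lem:Mr0} plus local boundedness of the integrands, and the ``valuation''/approximation remark already used in Corollary \ref{cor:sigma1} if one first wants smooth $u$), but rather the bookkeeping in the recursive elimination: verifying that the accumulated products of the factors $(2j-1)/(n-2j)$, after multiplying through by the $-2a$ prefactor and the successive $a$'s, simplify precisely to $(2i-1)!!(n-2i-2)!!/(n-2)!!\cdot a^i$. This is a finite induction on $i$ — one shows the partial coefficient after $i$ eliminations is $\frac{(2i-1)!!}{(n-2)!!/(n-2i-2)!!}$ times the earlier one — but it requires care with the parity of $n$ and with the terminal term, where $\M_{-1}(\G)=|\Omega|$ enters via convention and $I_n=0$ truncates the chain. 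The hypothesis that $\cl(\Omega)$ is diffeomorphic to a ball is used exactly to guarantee that a center point and distance function with the properties above exist (so that the punctured region is diffeomorphic to a spherical shell and the level sets foliate it correctly); this is where that assumption is genuinely needed, and I would state that explicitly. Alternatively, one could cite the Gauss-Bonnet-Chern route as in Solanes, but the Reilly-identity derivation above keeps the paper self-contained.
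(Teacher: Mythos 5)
Your overall skeleton matches the paper's: apply the constant-curvature identity \eqref{eq:cc}, shrink the inner hypersurface to a point using Lemma \ref{lem:Mr0} to obtain a family of relations $\M_r(\G)=(r+1)\int_\Omega\sigma_{r+1}(\kappa^u)-a(n-r)\int_\Omega\sigma_{r-1}(\kappa^u)$ (plus $|\S^{n-1}|$ when $r=n-1$), and then eliminate the bulk integrals recursively to produce the double-factorial coefficients. That elimination is exactly the paper's ``use these expressions iteratively'' step, and your bookkeeping of the products $(2j-1)/(n-2j)$ is the right mechanism.

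However, there is a genuine gap at the very first step: your choice of $u$ as the distance function from an interior point is not admissible. Theorem \ref{thm:comparison} (and hence \eqref{eq:cc}) requires $u$ to be constant on \emph{both} boundary hypersurfaces; the Stokes boundary term over $\G$ equals $\M_r(\G)$ only because $\nabla u/|\nabla u|$ is the outward normal of $\G$, i.e.\ only if $\G$ is a level set of $u$. The distance function from a center is constant on the geodesic spheres $S_\rho$ but not on the general hypersurface $\G$ (which is not assumed to be a sphere, nor even convex), so the identity $\M_r(\G)-\M_r(S_\rho)=\dots$ you start from is simply not available. This is precisely why the paper takes a diffeomorphism $\phi\colon\cl(\Omega)\to B^n$ and sets $u=|\phi|^2$: this $u$ is constant on $\G$, has a single critical point $x_0$, and near $x_0$ its level sets are convex and can be squeezed between geodesic spheres (via Corollary \ref{cor:cc}) so that Lemma \ref{lem:Mr0} still gives the limits $0$ and $|\S^{n-1}|$. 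So the hypothesis that $\cl(\Omega)$ is a ball is used to build a global defining function for $\G$ whose level sets sweep down to a point---not, as you state, to make the distance-function foliation work. Separately, a smaller slip: in the $r=n-1$ relation the coefficient is $a(n-r)=a$, not $2a$, so the correct starting identity is $\M_{n-1}(\G)=|\S^{n-1}|-a\int_\Omega\sigma_{n-2}(\kappa^u)$; with your $-2a$ prefactor the final coefficients would come out twice the claimed $(2i-1)!!(n-2i-2)!!/(n-2)!!\,a^i$.
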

\begin{proof}
Let $\phi\colon\cl(\Omega)\to B^n$ be a diffeomorphism to the unit ball in $\R^n$, and set $u(x):=|\phi(x)|^2$.
All regular level sets $\gamma$ of $u$ satisfy \eqref{eq:cc}. Furthermore, these level sets are convex near the minimum point $x_0$ of $u$, since $u$ has positive definite Hessian at $x_0$. So by Corollary \ref{cor:cc}, for these small level sets, 
$$
\M_r(S)\leq \M_r(\gamma)\leq \M_r(S'),
$$ 
where $S$ and $S'$ are geodesic spheres centered at $x_0$ such that $S$ is nested inside $\gamma$, and $\gamma$ is nested inside $S'$. Consequently, by Lemma \ref{lem:Mr0}, as $\g$ shrinks to $x_0$, $\M_{n-1}(\gamma)$ converges to $|\S^{n-1}|$, while $\M_r(\gamma)$ vanishes for $r\leq n-2$.  Thus, since $\sigma_n(\kappa^u)=0$, \eqref{eq:cc} yields
$$
\M_{n-1}(\G)=|\S^{n-1}|-a\int_{\Omega} 
\sigma_{n-2}(\kappa^u),
$$
and
$$
\int_{\Omega}\sigma_{r}(\kappa^u)\, 
=
\frac{1}{r}\M_{r-1}(\G)
+
\frac{a(n-r+1)}{r}\int_{\Omega} 
\sigma_{r-2}(\kappa^u) ,
$$
for $r\leq n-2$. Using these expressions iteratively completes the proof.
\end{proof}

Finally we include a characterization for hyperbolic balls, which extends to all mean curvatures a previous result of the authors on Gauss-Kronecker curvature \cite[Cor. 5.5]{ghomi-spruck2022}.

\begin{corollary}\label{cor:balls}
Let $M$ be a Cartan-Hadamard $n$-manifold with curvature $\leq a\leq 0$, and $B_\rho$ be a ball of radius $\rho$ in $M$. Then for $1\leq r\leq n-1$,
$$
\M_r(\partial B_\rho)\geq  \M_r(\partial B_\rho^a),
$$
where $B_\rho^a$ denotes a ball of radius $\rho$ in a manifold of constant curvature a.  Equality holds
 only if $B_\rho$ is isometric to $B_\rho^a$.
\end{corollary}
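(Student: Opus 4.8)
The plan is to compare $B_\rho$ with the model ball $B_\rho^a$ by realizing both as the inner hypersurface in a Theorem \ref{thm:comparison} configuration where the outer hypersurface is taken to infinity (or, more carefully, where the comparison is run from the center outward). Concretely, let $u=d(\cdot,p)$ be the distance function to the center $p$ of $B_\rho$, so the level sets of $u$ are the geodesic spheres $S_t=\partial B_t$, and apply the formula with $\g=S_\varepsilon$ a small sphere and $\G=S_\rho=\partial B_\rho$. Since $u$ has $|\nabla u|\equiv 1$, the term involving $|\nabla u|_i$ vanishes, and Theorem \ref{thm:comparison} (exactly as in the proof of Corollary \ref{cor:parallel}) gives
\be\label{eq:balls1}
\M_r(\partial B_\rho)-\M_r(\partial B_\varepsilon)=(r+1)\int_{B_\rho\setminus B_\varepsilon}\sigma_{r+1}(\kappa^u)-\int_{B_\rho\setminus B_\varepsilon}\sum\kappa^u_{i_1}\cdots\kappa^u_{i_{r-1}}K_{i_r n}.
\ee
Letting $\varepsilon\to 0$ and using Lemma \ref{lem:Mr0} ($\M_r(\partial B_\varepsilon)\to 0$ for $r\le n-2$; the case $r=n-1$ needs the extra $|\S^{n-1}|$ term but is handled the same way), we get a formula for $\M_r(\partial B_\rho)$ purely in terms of the principal curvatures $\kappa^u_i$ of the concentric geodesic spheres and the sectional curvatures $K_{i_r n}$ along radial planes.

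The key geometric input is then a Riccati/Rauch comparison: since the sectional curvature of $M$ is $\le a$, the standard Hessian comparison theorem gives that the principal curvatures of $\partial B_t$ satisfy $\kappa^u_i(t)\ge \mathrm{ct}_a(t)$, where $\mathrm{ct}_a$ denotes the generalized cotangent ($\mathrm{ct}_a(t)=\sqrt{-a}\coth(\sqrt{-a}\,t)$ for $a<0$, and $1/t$ for $a=0$), which is precisely the (constant-on-the-sphere) principal curvature of a geodesic sphere of radius $t$ in the model space of constant curvature $a$. Because each $\sigma_{r+1}$ is monotone in each variable on the positive orthant, $\sigma_{r+1}(\kappa^u(t))\ge \binom{n-1}{r+1}\mathrm{ct}_a(t)^{r+1}$; and because $K_{i_r n}\le a\le 0$ while the remaining product $\kappa^u_{i_1}\cdots\kappa^u_{i_{r-1}}\ge 0$, the term $-\sum\kappa^u_{i_1}\cdots\kappa^u_{i_{r-1}}K_{i_r n}\ge -\binom{n-1}{r-1}\,a\,(n-r)\,\mathrm{ct}_a(t)^{r-1}$ (with the appropriate combinatorial count). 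Matching these against the corresponding identity \eqref{eq:balls1} for the model ball $B_\rho^a$ — where $\kappa^u_i\equiv\mathrm{ct}_a$ and $K_{i_r n}\equiv a$, so both displayed integrands are attained with equality pointwise — yields $\M_r(\partial B_\rho)\ge\M_r(\partial B_\rho^a)$ after integrating over $0<t<\rho$ (using coarea so that the volume integral over $B_\rho\setminus B_\varepsilon$ becomes $\int_\varepsilon^\rho(\cdots)\,|S_t|\,dt$ and comparing $|S_t|$ as well via the volume comparison, which goes the same direction). One must be slightly careful that $u=d(\cdot,p)$ is only $\C^{1,1}$ and the spheres are smooth only away from the cut locus, but in a Cartan-Hadamard manifold there is no cut locus, so $u$ is smooth on $M\setminus\{p\}$ and all of this is legitimate.

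For the equality case, suppose $\M_r(\partial B_\rho)=\M_r(\partial B_\rho^a)$. Then every inequality above must be an equality for almost every $t\in(0,\rho)$, hence by continuity for all such $t$. From the $\sigma_{r+1}$ term being extremal we deduce $\kappa^u_i(t)=\mathrm{ct}_a(t)$ for all $i$ and all $t<\rho$; equivalently the shape operator of every concentric geodesic sphere is $\mathrm{ct}_a(t)\,\mathrm{Id}$. Feeding this back into the Riccati equation $\kappa'+\kappa^2=-K_{i n}$ satisfied along radial geodesics forces $K_{i n}(t)\equiv a$ for every radial $2$-plane at every point of $B_\rho$; in other words $M$ has constant sectional curvature $a$ on all radial planes through $p$. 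Combined with the fact that geodesic spheres about $p$ are totally umbilic with the model curvature, a standard argument (integrating the metric in geodesic polar coordinates: $g=dt^2+\mathrm{sn}_a(t)^2\,g_{\S^{n-1}}$, which is exactly the model metric) shows that $B_\rho$ is isometric to $B_\rho^a$, as claimed.

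The main obstacle I anticipate is organizing the combinatorial bookkeeping in \eqref{eq:balls1} cleanly enough that the two comparison inequalities (one for the $\sigma_{r+1}$ term, one for the curvature term) line up with the corresponding identity for the model and integrate transparently; a secondary subtlety is making the equality-case rigidity fully rigorous — one needs to be sure that pointwise equality of the integrands, not just equality of the integrals, is forced, which is why it is important that the comparison is between monotone functions of quantities that are themselves pointwise ordered, so that equality of integrals does propagate to pointwise equality (up to a null set, then everywhere by smoothness on $M\setminus\{p\}$).
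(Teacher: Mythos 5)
Your proposal is correct and follows essentially the same route as the paper: apply Theorem \ref{thm:comparison} to the radial distance function from the center (so the $|\nabla u|_i$ term drops), shrink the inner sphere via Lemma \ref{lem:Mr0}, bound the curvature term using $K\le a$ together with convexity of the geodesic spheres, and compare the principal curvatures and volume element of $\partial B_t$ with those of the constant-curvature model, where the same identity holds with equality. The only minor deviations are that the paper simply quotes its earlier result for $r=n-1$ and settles the equality case by noting that equality forces the radial sectional curvatures to equal $a$ (then cites a rigidity lemma from \cite{ghomi-spruck2022}), whereas you deduce umbilicity $\kappa^u_i=\sqrt{-a}\coth(\sqrt{-a}\,t)$ and reconstruct the polar-coordinate metric directly --- both arguments are sound.
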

\begin{proof}
For $r=n-1$, the desired inequality has already been established \cite[Cor. 5.5]{ghomi-spruck2022}. Suppose then that $r\leq n-2$. 
We will show that
\be\label{eq:Gr-balls}
\M_r(\partial B_\rho)\geq (r+1)\int_{B_\rho}\sigma_{r+1}(\kappa^u)\, -a(n-r)\int_{B_\rho} 
\sigma_{r-1}(\kappa^u) \geq \M_r(\partial B_\rho^a).
\ee
Letting $u$ be the distance squared function from the center $o$ of $B_\rho$, and  $\g$ shrink to $o$ in Theorem \ref{thm:comparison}, yields the first inequality in \eqref{eq:Gr-balls} via Lemma \ref{lem:Mr0}.
The principal curvatures of $\partial B_{\rho}$ are
 bounded below by $\sqrt{-a}\coth(\sqrt{-a}\rho)$ \cite[p. 184]{karcher1989}, which are the principal curvatures of $\partial B_\rho^a$. Hence, the mean curvatures of $\partial B_\rho$ satisfy
 $$
 \sigma_{r}(\kappa^u)\geq \left(\begin{matrix}n-1\\r\end{matrix}\right)\big(\sqrt{-a}\coth (\sqrt{-a} \rho) \big)^{r}=\sigma^a_{r}(\kappa^u),
 $$
 where $\sigma^a_{r}(\kappa^u)$ are the mean curvatures of $\partial B^a_\rho$.
 Furthermore, if $A(\rho,\theta)d\theta$ denotes the volume element of $\partial B_\rho$ in geodesic spherical coordinates, then by \cite[(1.5.4)]{karcher1989}, 
$$
A(\rho,\theta)\geq \left(\frac{\sinh(\sqrt{-a}\rho)}{\sqrt{-a}}\right)^{n-1}=A^a(\rho,\theta),
$$
where $A^a(\rho,\theta)d\theta$ is the volume element of $\partial B_\rho^a$; see \cite[Cor. 5.5]{ghomi-spruck2022}. Thus,
\begin{eqnarray*}
\int_{B_\rho} \sigma_r(\kappa^u) 
\geq
\int_{0}^\rho \int_{\S^{n-1}} \sigma_{r}^a(\kappa^u) A^a(t,\theta)d\theta dt
=
\int_{B_\rho^a} \sigma_r^a(\kappa^u) ,
\end{eqnarray*}
which yields the second inequality in \eqref{eq:Gr-balls}. If
$\M_r(\partial B_\rho)=\M_r(\partial B_\rho^a)$,
then equality holds in the first inequality of \eqref{eq:Gr-balls}. So $K_{rn}=a$, i.e., the radial sectional curvatures of $B_\rho$ are constant, which forces $B_\rho$ to have constant curvature $a$ \cite[Lem. 5.4]{ghomi-spruck2022}. Hence $B_\rho$ is isometric to $B_\rho^a$.
\end{proof}

\addtocontents{toc}{\protect\setcounter{tocdepth}{0}}%for hiding from table of contents
%\section*{Acknowledgements}

\addtocontents{toc}{\protect\setcounter{tocdepth}{1}}%to resume listing of the sections  
\bibliography{references}

\end{document}